\newcommand{\Ste}{\text{Ste}}
\DeclareMathOperator\erf{erf}
\DeclareMathOperator\erfc{erfc}
\newtheorem{thm}{Theorem}[section]
\newtheorem{cor}[thm]{Corollary}
\theoremstyle{definition}
\theoremstyle{remark}
\newtheorem{rem}[thm]{Remark}
\numberwithin{equation}{section}
\pgfplotsset{compat=1.16}
\begin{document}
\title{Relationship among solutions for three-phase change problems with Robin, Dirichlet, and Neumann boundary conditions}

\author{
Julieta Bollati$^{1,2}$, María F. Natale$^1$, José A. Semitiel$^1$, Domingo A. Tarzia$^{1,2}$ \\
\small{$^{1}$ Depto de Matemática, FCE-Universidad Austral, 2000 Rosario, Argentina} \\
\small{$^{2}$ CONICET, Argentina}
}

\date{}  % Optional: can remove or add the date

\maketitle  % To print the title and author(s)

\begin{abstract}
This study investigates the melting process of a three-phase Stefan problem in a semi-infinite material, imposing a convective boundary condition at the fixed face. By employing a similarity-type transformation, the problem is reduced to a solvable form, yielding a unique explicit solution. The analysis uncovers significant equivalences among the solutions of three different three-phase Stefan problems: one with a Robin boundary condition, another with a Dirichlet boundary condition, and a third one with a Neumann boundary condition at the fixed face. These equivalences are established under the condition that the problem data satisfy a specific relationship, providing new insights into the behaviour of phase change problems under varying boundary conditions.
\end{abstract}

\noindent\textbf{Keywords:} Three-phase Stefan problem, Free boundary problem, Convective boundary condition, Similarity-type solution, Explicit solution.

\section{Introduction}

Stefan problems are a significant area of study because they occur in various important engineering and industrial contexts. They are crucial for understanding phase transition phenomena, especially in scenarios involving heat transfer and processes of solidification or melting. The goal of Stefan problems is to describe the liquid and solid phases of a material undergoing a phase change and to identify the location of the sharp interface that separates these phases, known as the free boundary. Transient heat conduction issues that include one or more phase changes are found in a number of practical areas. Applications of Stefan-type problems include the solidification of binary alloys \cite{BrPlVa2021, Ru1971,  SoWiAl1982, VeCiTa2024}, continuous casting of steel \cite{BeCaDu2023}, and cryopreservation of cells \cite{DaWaByHe2020}. So many applications of phase-change processes can be seen in the books \cite{ AlSo1993, Cr1984, Gu2018, KoKr2020, Lu1991, Pe2020, SzTh1971, Vi1996}.

During the solidification or melting process, the material can be divided into three distinct regions: a solid region, a mushy zone where both solid and liquid phases coexist, and a liquid region. In the case of polymorphous materials such as metallic iron and silica, multiple crystalline forms exist in the solid phase, resulting in several free boundaries between different phases. For example, metallic iron has three main crystalline forms, while silica exists in several distinct forms like quartz, tridymite, and cristobalite under high pressure. When these polymorphous materials freeze or melt, various phases are separated by multiple moving interfaces \cite{Ch2000,Tao1979}. 

Under certain boundary conditions, it is possible to find similarity type solutions to multiphase Stefan problems. In particular, in \cite{We1955, Wi1978}, it was considered a $n$-phase Stefan problem for a semi-infinite material imposing a constant temperature at the fixed face. A similar study was carried out in \cite{SaTa1989} with a Neumann type condition at the fixed face. A multiphase implicit Stefan problem was studied in \cite{ZhZhSh2017} for a one-dimensional non-Darcy flow in a semi-infinite porous media. However, the existence of analytical solutions to phase-change problems is challenging to ascertain due to the inherent non-linearity of these issues. For the analysis of more complex scenarios, numerical methods seem to be highly efficient. In \cite{Ch2000}, a hybrid numerical method is employed that combines the Laplace transform technique, control-volume formulation, and Taylor series approximations. In \cite{ZhZhBu2015}, an approximate analytical solution is derived for a non-linear multiphase Stefan problem, and the accuracy of this approximate method is assessed by comparison with the available exact solution. 

Heat transfer in three-phase systems presents significant challenges that are critical for a variety of applications. For instance, in \cite{AnNiShSoStSa2024}, researchers developed an analytical solution for the time-dependent heat transfer equation that accounts for phase change. This solution enables a new numerical algorithm to analyze temperature and heat flux variations in a three-layer building wall under transient ambient conditions. Additionally, \cite{AbGa2021} examines a numerical simulation of a triplex-tube thermal energy system that combines multiple phase change materials with porous metal foam. Another relevant application is presented in \cite{OlSaMaTa2008}, where an analytical solution for coupled heat and mass transfer during the freezing of high-water-content materials is developed.

Moreover, a new numerical method for modelling phase change problems involving three phases: solid, liquid, and gas is presented in \cite{ThPa2023}. It focuses on simulating melting and solidification of phase change materials with variable density and thermophysical properties. The method accounts for free surface dynamics and density changes during the phase transition. It revisits the two-phase Stefan problem, which involves a density jump between phases, and proposes a way to incorporate kinetic energy changes into the Stefan condition.

A specific case of a three-phase system is the  three-phase Stefan problem that consists of the solidification of an alloy. An alloy undergoes at least two phase changes when it solidifies: one when the temperature falls below the liquid temperature and another when it falls below the solid temperature. In contrast to the case of a pure metal, there are now two free boundaries corresponding to the liquid and solid temperatures.

The aim of this work is twofold.  First, the existence and uniqueness of solution to a three-phase melting Stefan problem is established, specifically under a Robin type boundary condition at the fixed face  $x=0$. Second, the connections between this problem and those arising from the imposition of Dirichlet or Neumann boundary conditions at the fixed face, are explored.

The organization of this paper is as follows. In Section 2, we formally present 
 one-dimensional Stefan problems with different  boundary conditions  concerning the melting of a semi-infinite material in the region $x\geq 0$, which undergoes three-phase changes. In addition, we prove the existence and uniqueness of the solution by imposing a Robin-type condition at the fixed face $x=0$. Furthermore, we retrieve similarity solutions from the existing literature that correspond to cases in which Dirichlet and Neumann boundary conditions are applied at the fixed boundary $x=0$. Finally, these solutions will be employed in Section 3 to establish a relationship among them.

\section{Three-phase Stefan problems with different boundary conditions}

In this section, we focus on the analysis of   three Stefan problems involving three phases for the melting of a semi-infinite material $x\geq 0$, each characterized by different conditions at the fixed face $x=0$. The aim is to determine the temperature 
\begin{equation}
\Phi(x,t)=\left\lbrace \begin{array}{ccll}
\Phi_3(x,t) \quad & \text{if} \quad & 0<x<y_2(t), \; &t>0, \\
\Phi_2(x,t) \quad & \text{if} \quad & y_2(t)<x<y_1(t), \; &t>0, \\
\Phi_1(x,t) \quad & \text{if} \quad & y_1(t)<x, \; &t>0, 
\end{array} \right.
\end{equation}
and the free boundaries $x=y_i(t)$, $i=1,2$, $t>0$ that separates the three regions that satisfy
\begin{align}
&  \frac{\partial \Phi_3}{\partial t}=\alpha_3 \frac{\partial^2 \Phi_3}{\partial x^2},  &0<x<y_2(t), \quad t>0, \label{EcCalor-Fase3}\\
&  \frac{\partial \Phi_2}{\partial t}=\alpha_2 \frac{\partial^2 \Phi_2}{\partial x^2},  &y_2(t)<x<y_1(t), \quad t>0, \label{EcCalor-Fase2}\\
&  \frac{\partial \Phi_1}{\partial t}=\alpha_1 \frac{\partial^2 \Phi_1}{\partial x^2},  &x>y_1(t), \quad t>0, \label{EcCalor-Fase1}\\
& \Phi_3(y_2(t),t)=\Phi_2(y_2(t),t)=B, \qquad &t>0,\label{B}\\
& \Phi_2(y_1(t),t)=\Phi_1(y_1(t),t)=C, \qquad &t>0,\label{C}\\
&  \Phi_1(x,0)=\Phi_1(+\infty,t)=D, \qquad &t>0, \label{D}\\
&  k_2 \frac{\partial \Phi_2}{\partial x}(y_2(t),t)-  k_3\frac{\partial \Phi_3}{\partial x}(y_2(t),t)=\delta_2 \dot{y_2}(t), &t>0, \label{CondStefan2}\\
&  k_1 \frac{\partial \Phi_1}{\partial x}(y_1(t),t)-  k_2\frac{\partial \Phi_2}{\partial x}(y_1(t),t)=\delta_1 \dot{y_1}(t), &t>0, \label{CondStefan1}\\
& y_1(0)=y_2(0)=0, \label{w_1 y w_2 0}
\end{align}
where the positive constants \(\alpha_i = \frac{k_i}{\rho c_i}\), \(k_i\), and \(c_i\) represent the thermal diffusivity, thermal conductivity, and specific heat, respectively, for phase \(i = 1, 2, 3\), with \(\rho\) being the common mass density. It is assumed throughout the paper that
\begin{equation}\label{hipalpha}
\alpha_2>\alpha_3.
\end{equation}

The latent heat per unit volume used for passing from phase $i$ to $i+1$ for $i=1,2$ is $\delta_i=\rho \ell_i$ where $\ell_i>0$ represents the latent heat per unit mass. The phase change temperatures $B$ and $C$, and the initial temperature $D$ verify the condition
\begin{equation}
B>C>D.
\end{equation}

\begin{comment}
\begin{figure}[h!!]
\centering

\begin{tikzpicture}
    \begin{axis}[
        axis lines=middle,
        xlabel={$x$},
        ylabel={$t$},
        xmin=0, xmax=6,
        ymin=0, ymax=4,
        ticks=none,
        axis line style={->},
        xlabel style={right},
        ylabel style={left},
        clip=false
    ]

        % Curva 1: y = (x^2)/2
        \addplot[domain=0:2.7, samples=100, thick, black] {(x^2)/2};

        % Curva 2: y = (x^2)/8
        \addplot[domain=0:5.35, samples=100, thick, black] {(x^2)/8};

        % Sombrear la región entre las dos curvas (abajo)
        \addplot[
            domain=0:5.35, 
            samples=100, 
            fill=gray!30, % Tonalidad de gris 30%
            opacity=1,
            forget plot
        ] {max((x^2)/8, 0)} \closedcycle;

        % Sombrear la región entre las dos curvas (arriba)
        \addplot[
            domain=0:5.35, 
            samples=100, 
            fill=gray!70, % Tonalidad de gris 70%
            opacity=0.5,
            forget plot
        ] {min((x^2)/2, 3.5)} \closedcycle;

        % Etiquetas
        \node at (4.7, 2) {$\Phi_1(x,t)$};
        \node at (3, 2) {$\Phi_2(x,t)$};
        \node at (1, 2) {$\Phi_3(x,t)$};
        \node at (5.8, 3.8) {$x=y_1(t)$};
        \node at (3, 3.8) {$x=y_2(t)$};
        \node at (-0.1,-0.1) {$0$};

    \end{axis}
\end{tikzpicture}

\caption{$3$-phase Stefan Problem.}
    \label{Fig1}
\end{figure}

\end{comment}

In the following subsections, we will apply different boundary conditions at the fixed face $x=0$. We begin by 
considering a convective boundary condition given by

\begin{equation}
    k_3 \frac{\partial \Phi_3}{\partial x}(0,t) = \frac{h_0}{\sqrt{t}}\left(\Phi_3(0,t) - A_\infty\right), \qquad t > 0,
    \label{convectiva}
\end{equation}
where $h_0 > 0$ is the coefficient that characterizes the heat transfer at the fixed face and $A_\infty > B$ is the bulk temperature. In this case, we  demonstrate the existence and uniqueness of a similarity-type solution.

Next, we present the similarity-type solution obtained by imposing a Dirichlet boundary condition at the fixed face, as described in \cite{Wi1978}, considering only three phases. This condition is given by
\begin{equation}
    \Phi_3(0,t) = A, \qquad t > 0,
    \label{dirichlet}
\end{equation}
with \(A > B\). Finally, we  recover the similarity-type solution for a Neumann boundary condition from \cite{SaTa1989} for the three-phase Stefan problem, given by 
\begin{equation}
    k_3 \frac{\partial \Phi_3}{\partial x}(0,t) = -\frac{q_0}{\sqrt{t}}, \qquad t > 0,
    \label{neumann}
\end{equation}
where $q_0 > 0$.

\subsection{Existence and uniqueness of solution  by imposing a convective condition at the fixed face}
We propose a similarity-type solution to the problem  \eqref{EcCalor-Fase3}-\eqref{w_1 y w_2 0} and \eqref{convectiva} represented in the following manner:
\begin{align}
&  v_3(x,t)=A_3+B_3 \erf\left(\tfrac{x}{2\sqrt{\alpha_3 t}}\right), \qquad &0<x<w_2(t), \quad t>0, \label{v3}\\
&  v_2(x,t)=A_2+B_2\erf\left(\tfrac{x}{2\sqrt{\alpha_2 t}}\right), \qquad &w_2(t)<x<w_1(t), \quad t>0, \label{v2}\\
&  v_1(x,t)=A_1+B_1 \erf\left(\tfrac{x}{2\sqrt{\alpha_1 t}}\right), \qquad &x>w_1(t), \quad t>0, \label{v1}\\
&  w_2(t)=2\xi_2 \sqrt{\alpha_1t}, \quad &t>0, \label{w2}\\
&  w_1(t)=2\xi_1 \sqrt{\alpha_1t}, \quad &t>0, \label{w1}
\end{align}
where $A_i$ and $B_i$ are unknown constants to be determined for $i=1,2,3$, and $\xi_1$ and $\xi_2$ are positive dimensionless parameters that characterizes the free boundaries and must also be determined.

From  conditions (\ref{B})-(\ref{D}), the expressions for the temperatures in the three phases become:
\begin{align}
&  v_3(x,t)=\tfrac{\frac{Bk_3}{h_0 \sqrt{\pi \alpha_3}}+A_{\infty}\erf\left(\xi_2 \sqrt{\frac{\alpha_1}{\alpha_3}}\right)-(A_{\infty}-B) \erf\left(\frac{x}{2\sqrt{\alpha_3 t}}\right)}{\frac{k_3}{h_0 \sqrt{\pi \alpha_3}}+\erf \left(\xi_2 \sqrt{\frac{\alpha_1}{\alpha_3}}\right)}, & 0<x<w_2(t),\;t>0, \label{v3bis}\\
&  v_2(x,t)=\tfrac{-C \erf\left(\xi_2 \sqrt{\frac{\alpha_1}{\alpha_2}}\right)+B \erf\left(\xi_1 \sqrt{\frac{\alpha_1}{\alpha_2}}\right)-(B-C) \erf\left(\frac{x}{2\sqrt{\alpha_2 t}}\right)}{\erf\left(\xi_1 \sqrt{\frac{\alpha_1}{\alpha_2}}\right)-\erf\left(\xi_2 \sqrt{\frac{\alpha_1}{\alpha_2}}\right)} , & w_2(t)<x<w_1(t),\;t>0, \label{v2bis}\\
&  v_1(x,t)=\tfrac{C\left(1-\erf\left(\frac{x}{2\sqrt{\alpha_1 t}}\right)\right)+D\left(                       \erf\left(\frac{x}{2\sqrt{\alpha_1 t}}\right)-\erf(\xi_1) \right)}{\erfc(\xi_1)}, & x>w_1(t),\;t>0, \label{v1bis}
\end{align}
where $\erf$ and $\erfc$ denote the error function and the complementary error function, respectively.
The conditions \eqref{CondStefan2} and (\ref{CondStefan1})  are satisfied if $\xi_1$ and $\xi_2$ fulfill the following equalities:
\begin{align}
&  \xi_2=\tfrac{k_3}{\delta_2 \sqrt{\pi \alpha_1 \alpha_3}}\tfrac{(A_{\infty}-B)\exp\left(-\xi_2^2 \frac{\alpha_1}{\alpha_3}\right)}{\frac{k_3}{h_0 \sqrt{\pi \alpha_3}}+\erf\left(\xi_2 \sqrt{\frac{\alpha_1}{\alpha_3}}\right)}-\tfrac{k_2}{\delta_2 \sqrt{\pi \alpha_1 \alpha_2}}\tfrac{(B-C) \exp\left(-\xi_2 ^2 \frac{\alpha_1}{\alpha_2}\right)}{\erf\left(\xi_1 \sqrt{\frac{\alpha_1}{\alpha_2}}\right)-\erf\left(\xi_2 \sqrt{\frac{\alpha_1}{\alpha_2}}\right)}, \label{xi2}\\
& \xi_1=-\tfrac{k_1}{\delta_1 \alpha_1 \sqrt{\pi}}\tfrac{(C-D)\exp\left(-\xi_1^2 \right)}{\erfc\left(\xi_1\right)}-\tfrac{k_2}{\delta_1 \sqrt{\pi \alpha_1 \alpha_2}}\tfrac{(B-C) \exp\left(-\xi_1 ^2 \frac{\alpha_1}{\alpha_2}\right)}{\erf\left(\xi_1 \sqrt{\frac{\alpha_1}{\alpha_2}}\right)-\erf\left(\xi_2 \sqrt{\frac{\alpha_1}{\alpha_2}}\right)}. \label{xi1}
\end{align}
%where
%\begin{equation}
%\erf(x)=\frac{2}{\sqrt{\pi}}\displaystyle\int_0^x \exp(-\eta^2 )d\eta,\qquad \erfc(x)=1-\erf(x), \qquad x>0
%&\end{equation}

Expression (\ref{xi1}) can be rewritten as follows
 \begin{equation}
\erf\left(\xi_2 \sqrt{\tfrac{\alpha_1}{\alpha_2}}\right)=H(\xi_1),
\end{equation}
where the real function $H$ is defined by
\begin{equation}\label{H}
H(z)=\erf\left(z \sqrt{\tfrac{\alpha_1}{\alpha_2}}\right)-\tfrac{\Ste_2}{\sqrt{\pi}}\tfrac{\ell_2}{\ell_1}\sqrt{\tfrac{k_2 c_1}{k_1 c_2}}\tfrac{\exp\left(-z ^2 \frac{\alpha_1}{\alpha_2}\right)}{\varphi(z)}, \quad z\geq 0,
\end{equation}
\begin{equation}
\varphi(z)=z+\tfrac{\Ste_1}{\sqrt{\pi}}\tfrac{\exp\left(-z ^2\right)}{\erfc(z)}, \qquad z\geq 0,
\end{equation}
and  the Stefan numbers are defined by:
\begin{equation} \label{Ste}
\Ste_1=\tfrac{c_1(C-D)}{\ell_1}\qquad,\qquad \Ste_2=\tfrac{c_2(B-C)}{\ell_2}.
\end{equation}

Taking into account that $H$ is an increasing function that satisfies
$$H(0)=-\tfrac{\Ste_2}{\Ste_2}\tfrac{\ell_2}{\ell_1}\sqrt{\tfrac{k_2 c_1}{k_1 c_2}}<0, \qquad H(+\infty)=1,$$
then, there exists a unique $z_0>0$ such that 
\begin{equation}\label{z0}
z_0=H^{-1}(0).
\end{equation}
Therefore 
\begin{equation}\label{xi2-convectivo}
\xi_2=\sqrt{\tfrac{\alpha_1}{\alpha_2}}\erf^{-1}\left(H(\xi_1)\right), \quad \xi_1>z_0.
\end{equation}

Notice that 
\begin{equation*}
\erf\left(\xi_2 \sqrt{\tfrac{\alpha_1}{\alpha_2}}\right)=H(\xi_1)=\erf\left(\xi_1 \sqrt{\tfrac{\alpha_1}{\alpha_2}}\right)-\tfrac{\Ste_2}{\sqrt{\pi}}\tfrac{\ell_2}{\ell_1}\sqrt{\tfrac{k_2 c_1}{k_1 c_2}}\tfrac{\exp\left(-\xi_1^2 \tfrac{\alpha_1}{\alpha_2}\right)}{\varphi(\xi_1)}<\erf\left(\xi_1 \sqrt{\tfrac{\alpha_1}{\alpha_2}}\right),
\end{equation*}
then $\xi_2<\xi_1.$

Isolating $\left(\erf\left(\xi_1 \sqrt{\frac{\alpha_1}{\alpha_2}}\right)-\erf\left(\xi_2 \sqrt{\frac{\alpha_1}{\alpha_2}}\right)\right)^{-1}$
from \eqref{xi2} and \eqref{xi1} we obtain that $\xi_1$ must be a solution to the following equation
\begin{equation}
Q(z)=U(z),\qquad z>z_0, \label{ecxi1}
\end{equation}
where
\begin{equation}\label{Q}
Q(z)=\frac{\ell_1}{\ell_2}\varphi(z)\exp\left(z^2 \tfrac{\alpha_1}{\alpha_2}\right), \qquad z\geq 0,
\end{equation}
\begin{equation}\label{U}
U(z)=T\left(\sqrt{\tfrac{\alpha_2}{\alpha_1}}\erf^{-1}(H(z))\right), \qquad z>z_0.
\end{equation}
and
\begin{equation}\label{T}
T(z)=\tfrac{\Ste_2}{\sqrt{\pi}c_2} \tfrac{A_{\infty}-B}{B-C} \sqrt{\tfrac{k_3 c_1 c_3}{k_1}} \tfrac{\exp\left(-z^2 \alpha_1\left(\frac{1}{\alpha_3}-\frac{1}{\alpha_2}\right)\right)}{\frac{k_3}{h_0 \sqrt{\pi \alpha_3}}+\erf\left( z \sqrt{\frac{\alpha_1}{\alpha_3}}\right)}-z \exp\left(z^2 \tfrac{\alpha_1}{\alpha_2}\right), \qquad z>z_0.
\end{equation}

If $\alpha_2>\alpha_3$ we obtain that $U$ is a strictly decreasing function. From the fact that  $U(z_0)$ is a positive constant, $U(+\infty)=-\infty$ and $Q$ is strictly increasing function such that $Q(0)=\frac{\ell_1}{\ell_2}\frac{\Ste_1}{\sqrt{\pi}}$, $Q(+\infty)=+\infty$, one can infer that the solution $\xi_1$  to the equation (\ref{ecxi1}) is, indeed, unique in $(z_0,+\infty)$ if and only if $U(z_0)>Q(z_0)$. Since we know that $H(z_0)=0$, this inequality is equivalent to the following condition on the parameters of the problem 
\begin{equation}
(A_{\infty}-B)\sqrt{\tfrac{c_1 c_3\alpha_3}{k_1 k_3}}h_0 > \ell_1 \varphi(z_0) \exp\left(z_0^2 \tfrac{\alpha_1}{\alpha_2}\right) .
\end{equation} \label{hip1}

Moreover, from \eqref{H}, we can write the previous inequality in an equivalent way
\begin{equation}
(A_{\infty}-B) h_0 \sqrt{\tfrac{c_3\alpha_3}{k_3}}>\sqrt{\tfrac{k_2c_2}{\pi}}(B-C)\tfrac{1}{\erf\left(z_0 \sqrt{\frac{\alpha_1}{\alpha_2}}\right)},
\end{equation}
or else
\begin{equation}
h_0>\tfrac{B-C}{A_{\infty}-B} \sqrt{\tfrac{k_2k_3c_2}{\pi c_3\alpha_3}}\tfrac{1}{\erf\left(z_0 \sqrt{\tfrac{\alpha_1}{\alpha_2}}\right)}.
\end{equation}

The previous analysis leads to the following theorem
\begin{thm}\label{Teo:ExistenciaConvectivo}
Assuming   $h_0>h_2$ with 
\begin{equation}\label{h2}
h_2=\tfrac{B-C}{A_{\infty}-B} \sqrt{\tfrac{k_2k_3c_2}{\pi c_3\alpha_3}}\tfrac{1}{\erf\left(z_0 \sqrt{\tfrac{\alpha_1}{\alpha_2}}\right)},
\end{equation}
there exists a unique solution to the problem \eqref{EcCalor-Fase3}--\eqref{w_1 y w_2 0} and \eqref{convectiva}. The temperature $v_i$  in each phase, for $i=1,2,3$,  is described by \eqref{v3bis}, \eqref{v2bis} and \eqref{v1bis}, respectively. The free boundaries $w_2$ and $w_1$, given by \eqref{w2} and \eqref{w1}, are characterized by the dimensionless parameters $\xi_1$ and $\xi_2$. The parameter $\xi_2$ is defined by  \eqref{xi2-convectivo}, while $\xi_1$ is the unique solution to equation \eqref{ecxi1}.  
\end{thm}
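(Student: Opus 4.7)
The plan is to carry out the existence/uniqueness argument via a standard similarity-reduction, already suggested by the derivation in the excerpt, and then reduce everything to a scalar root-finding problem whose solvability is controlled by the condition $h_0>h_2$.

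First I would show that any classical solution of the required form must have the structure \eqref{v3}--\eqref{w1}, since the heat equations \eqref{EcCalor-Fase3}--\eqref{EcCalor-Fase1} combined with the self-similar moving-boundary ansatz $w_i(t)=2\xi_i\sqrt{\alpha_1 t}$ force each $v_i$ to be affine in the error function of $x/(2\sqrt{\alpha_i t})$. Imposing the free-boundary temperature conditions \eqref{B}, \eqref{C}, the far-field and initial condition \eqref{D}, and the convective condition \eqref{convectiva} then determines the six constants $A_i,B_i$ as functions of $(\xi_1,\xi_2)$, yielding the explicit formulas \eqref{v3bis}--\eqref{v1bis}. This step is algebraic and produces a family of candidates parameterized by the pair $(\xi_1,\xi_2)$.

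Next I would impose the two Stefan conditions \eqref{CondStefan2}--\eqref{CondStefan1}, which become the coupled transcendental system \eqref{xi2}--\eqref{xi1} in $(\xi_1,\xi_2)$. Following the excerpt, equation \eqref{xi1} can be rewritten as $\erf(\xi_2\sqrt{\alpha_1/\alpha_2})=H(\xi_1)$, and since $H$ is strictly increasing with $H(0)<0$ and $H(+\infty)=1$ there is a unique $z_0>0$ with $H(z_0)=0$, giving $\xi_2=\sqrt{\alpha_1/\alpha_2}\,\erf^{-1}(H(\xi_1))$ well-defined and positive exactly on $\xi_1>z_0$; this also yields $\xi_2<\xi_1$ automatically. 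Substituting this into \eqref{xi2} eliminates $\xi_2$ and reduces the problem to the single scalar equation $Q(z)=U(z)$ on $(z_0,+\infty)$, with $Q,U$ defined by \eqref{Q}--\eqref{T}.

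The heart of the proof is then a monotonicity argument on this scalar equation. I would verify that $Q$ is strictly increasing with $Q(z_0)>0$ and $Q(+\infty)=+\infty$, which is immediate from the definition of $\varphi$ and the exponential factor. For $U$ I would show $U(z_0)$ is finite and positive, $U(+\infty)=-\infty$, and, crucially, that $U$ is strictly decreasing on $(z_0,+\infty)$: by the chain rule this reduces to checking that $T$ is strictly decreasing on its domain, where the hypothesis $\alpha_2>\alpha_3$ in \eqref{hipalpha} makes the first summand of $T$ in \eqref{T} decreasing (both the exponential and the denominator push in the same direction) and the second summand $-z\exp(z^2\alpha_1/\alpha_2)$ is manifestly decreasing; this is where I expect the main technical care, since one must track signs of the derivatives against the monotonicity of $H$ and $\erf^{-1}$. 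Given these monotonicities, $Q-U$ is strictly increasing on $(z_0,+\infty)$, tends to $+\infty$ at infinity, and equals $Q(z_0)-U(z_0)$ at the left endpoint, so a unique root $\xi_1\in(z_0,+\infty)$ exists if and only if $U(z_0)>Q(z_0)$. Finally, using $H(z_0)=0$ to simplify $U(z_0)$ and $Q(z_0)$, this inequality rearranges precisely to $h_0>h_2$ with $h_2$ as in \eqref{h2}, completing the existence-uniqueness argument.
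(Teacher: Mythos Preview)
Your proposal is correct and follows essentially the same route as the paper: the derivation preceding the theorem statement is exactly this similarity reduction to the scalar equation $Q(z)=U(z)$ on $(z_0,+\infty)$, followed by the monotonicity argument (with $\alpha_2>\alpha_3$ forcing $U$ decreasing) and the endpoint comparison $U(z_0)>Q(z_0)$ that rearranges to $h_0>h_2$. You have identified the same key technical point the paper glosses over, namely the verification that $U$ is strictly decreasing via the monotonicity of $T$ composed with the increasing map $z\mapsto\sqrt{\alpha_2/\alpha_1}\,\erf^{-1}(H(z))$.
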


\begin{rem}
In \cite{Ta2017}, a two-phase Stefan problem with a convective condition at the fixed face was studied. It was shown that for a unique solution to exist, the coefficient \( h_0 \) must satisfy the following inequality:

\[
h_0 > h_1 := \frac{k_1}{\sqrt{\pi \alpha_1}} \cdot \frac{C-D}{A_\infty - C}.
\]

In the case of three phases, we have proven that \( h_0 > h_2 \), and it is easy to see that the following relation holds:

\[
h_0 > h_2 > h_1.
\]

If the coefficient \( h_0 \) satisfies the following conditions:

\begin{enumerate}
    \item[i)] \( 0 < h_0 \leq h_1 \), then the problem defined by \eqref{EcCalor-Fase3}-\eqref{w_1 y w_2 0} and \eqref{convectiva} becomes a classical heat transfer problem for the initial solid phase. %, whose solution is provided in \cite{Ta2017}.
    \item[ii)] \( h_1 < h_0 \leq h_2 \), then the problem defined by \eqref{EcCalor-Fase3}-\eqref{w_1 y w_2 0} and \eqref{convectiva} becomes a two-phase Stefan problem. %, whose solution can be found in \cite{Ta2017}.
    \item[iii)] \( h_0 > h_2 \), then the problem defined by \eqref{EcCalor-Fase3}-\eqref{w_1 y w_2 0} and \eqref{convectiva} is a three-phase Stefan problem, whose unique solution is given by Theorem \ref{Teo:ExistenciaConvectivo}.
\end{enumerate}
\end{rem}

\begin{rem} In Figure \ref{MapaColores-convectivo}, we plot a colour map of the temperature $$v(x,t)=\left\lbrace \begin{array}{ccll}
v_3(x,t) \quad & \text{if} \quad & 0<x<w_2(t), \; &t>0, \\
v_2(x,t) \quad & \text{if} \quad & w_2(t)<x<w_1(t), \; &t>0, \\
v_1(x,t) \quad & \text{if} \quad & w_1(t)<x, \; &t>0, 
\end{array} \right.$$ described by \eqref{v3bis}-\eqref{v1bis} and the free boundaries $x=w_2(t)$ and $x=w_1(t)$ given by \eqref{w2} and \eqref{w1}, respectively, considering  the following parameters: $h_0=100 \; \tfrac{kg}{Ks^{{5/2}}}$, $A_\infty=334\;K$, $B=328\;K$, $C=324\;K$, $D=320\;K$, 
$k_1=0.2\; \tfrac{W}{mK}$,
$k_2=0.2\; \tfrac{W}{mK}$,
$k_3=0.2 \; \tfrac{W}{mK}$,
$c_1=2  \; \tfrac{J}{kg K}$,
$c_2=2 \; \tfrac{J}{kg K}$,
$c_3=2 \; \tfrac{J}{kg K}$,
$\rho=770\; \tfrac{kg}{m^3}$,
$\ell_1=160\; \tfrac{J}{kg}$,
$\ell_2=150\; \tfrac{J}{kg}$.
\end{rem}

\begin{figure}[h!!!!!!!!!!!!]
\begin{center}
\includegraphics[scale=0.27]{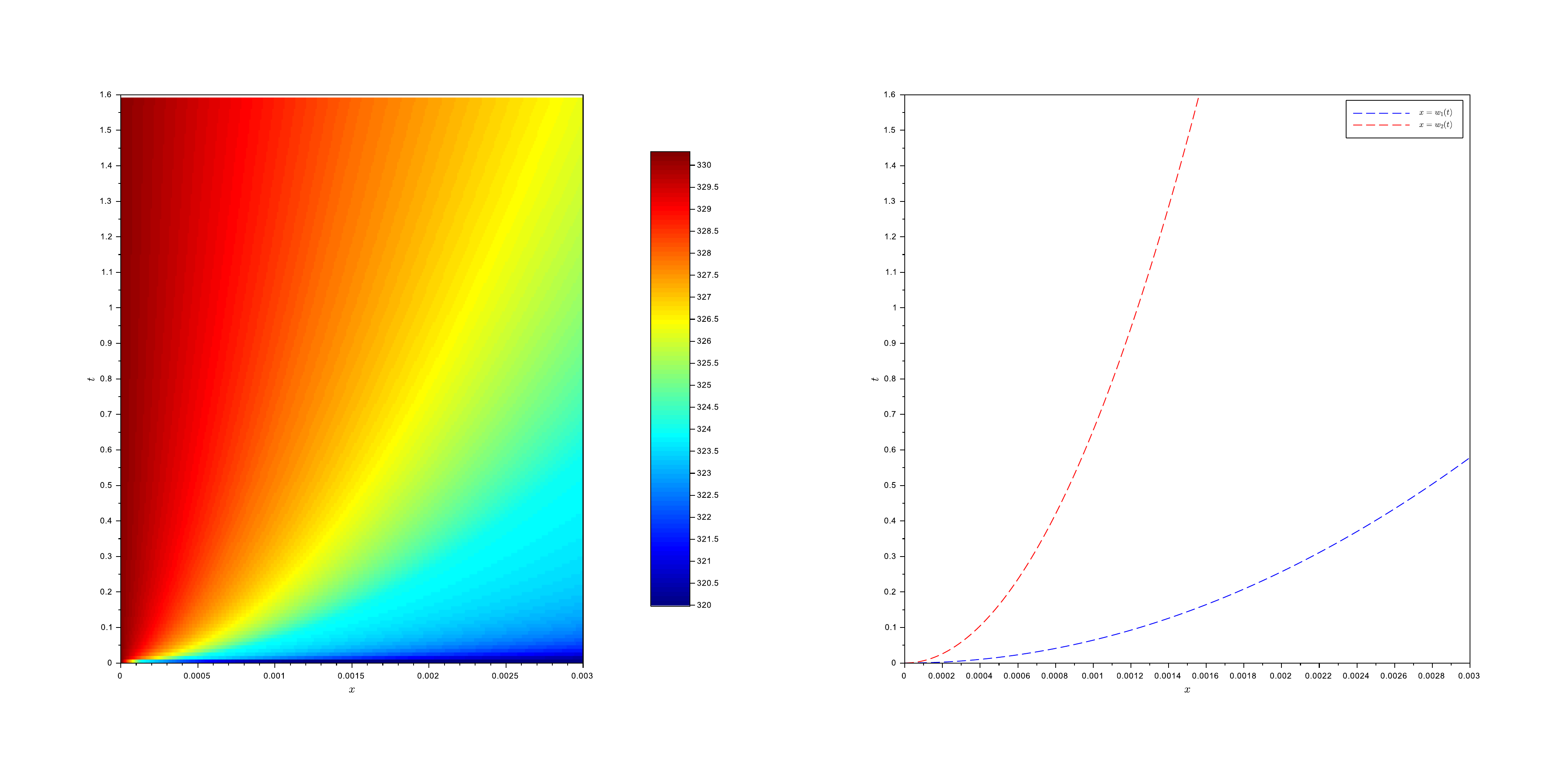}
\caption{Colour map of the temperature and the free boundaries of the three phase Stefan problem with a Robin type condition defined by \eqref{EcCalor-Fase3}--\eqref{w_1 y w_2 0} and \eqref{convectiva}.}
\label{MapaColores-convectivo}
\end{center}
\end{figure}

\subsection{Similarity-type solutions with a temperature and a flux condition at the fixed face}
The aim of this section is to connect the previously studied three-phase Stefan problem with the problems that involve Dirichlet and Neumann boundary conditions. These problems have been previously studied in the literature for the more general case of $n$ phases. Below, we present existence and uniqueness results for $n=3$.

From \citep{Wi1978} we can express the solution to the three-phase Stefan problem with a Dirichlet type condition at the fixed face $x=0$:

\begin{thm}\label{CasoTemperatura}
Under the assumption 
\begin{equation}\label{AB}
A>B,
\end{equation}
the problem  defined by \eqref{EcCalor-Fase3}-\eqref{w_1 y w_2 0} and \eqref{dirichlet} has  a unique solution given by

\begin{align}
&u_3(x,t)=A\tfrac{\erf\left(\mu_2  \sqrt{\tfrac{\alpha_1}{\alpha_3}}\right)-\erf\left(\tfrac{x}{2\sqrt{\alpha_3 t}}\right)}{\erf\left(\mu_2  \sqrt{\tfrac{\alpha_1}{\alpha_3}}\right)}+B\tfrac{\erf\left(\tfrac{x}{2\sqrt{\alpha_3 t}}\right)}{\erf\left(\mu_2  \sqrt{\tfrac{\alpha_1}{\alpha_3}}\right)}, & 0<x<r_2(t),\; t>0, \label{u3}\\
&u_2(x,t) = \tfrac{(C-B)\erf\left(\tfrac{x}{2\sqrt{\alpha_2 t}}\right)-C\erf\left(\mu_2 \sqrt{\tfrac{\alpha_1}{\alpha_2}}\right) + B\erf\left(\mu_1 \sqrt{\tfrac{\alpha_1}{\alpha_2}}\right)}{\erf\left(\mu_1 \sqrt{\tfrac{\alpha_1}{\alpha_2}}\right) - \erf\left(\mu_2 \sqrt{\tfrac{\alpha_1}{\alpha_2}}\right)}
,  & r_2(t)<x<r_1(t),\; t>0, \label{u2}\\
& u_1(x,t)=D+(C-D) \frac{\erfc\left(\frac{x}{2\sqrt{\alpha_1 t}}\right)}{\erfc(\mu_1)}, & x>r_1(t),\; t>0,\label{u1}\\
&r_2(t)= 2 \mu_2 \sqrt{\alpha_1 t}, & t>0,\label{Fron-mu2}\\
&r_1(t)= 2 \mu_1 \sqrt{\alpha_1 t}, & t>0,\label{Fron-mu1}
\end{align}
where
\begin{equation}
\mu_2=\sqrt{\tfrac{\alpha_2}{\alpha_1}}\erf^{-1}\left( H(\mu_1)\right), \label{mu2}
\end{equation}
and $\mu_1$ is the unique solution to 
\begin{equation}\label{QV}
Q(z)=V\left(  \sqrt{\tfrac{\alpha_2}{\alpha_1}}\erf^{-1}\left( H(z)\right)\right), \qquad z > z_0,
\end{equation}
where $z_0=H^{-1}(0)$ with $H$ defined by \eqref{H}, $Q$ is given by \eqref{Q}, and 
\begin{equation}\label{V}
V(z)=\tfrac{A-B}{\ell_2}\sqrt{\tfrac{c_1c_3k_3}{\pi k_1}}\tfrac{\exp\left(-z^2\left( \tfrac{\alpha_1}{\alpha_3}-\tfrac{\alpha_1}{\alpha_2} \right) \right)}{\erf\left(z \sqrt{\tfrac{\alpha_1}{\alpha_3}}\right)}-z\exp\left(z^2 \tfrac{\alpha_1}{\alpha_2}\right), \qquad z > 0.
\end{equation}
\end{thm}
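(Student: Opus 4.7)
The plan is to mirror the argument used for the Robin case (Theorem~\ref{Teo:ExistenciaConvectivo}), since the only structural difference is the boundary condition at $x=0$, which now merely prescribes a constant temperature rather than coupling $\Phi_3$ to its derivative. First I propose a similarity ansatz of the same shape as \eqref{v3}--\eqref{w1}, namely
\begin{equation*}
u_i(x,t)=A_i+B_i\erf\!\left(\tfrac{x}{2\sqrt{\alpha_i t}}\right),\quad i=1,2,3,\qquad r_2(t)=2\mu_2\sqrt{\alpha_1 t},\qquad r_1(t)=2\mu_1\sqrt{\alpha_1 t}.
\end{equation*}
Imposing \eqref{B}, \eqref{C}, \eqref{D} together with \eqref{dirichlet} determines the six constants $A_i,B_i$ in terms of the yet unknown $\mu_1,\mu_2$; a direct substitution produces the explicit formulas \eqref{u3}--\eqref{u1}.

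Next I would use the two Stefan conditions \eqref{CondStefan2} and \eqref{CondStefan1} to obtain a coupled system for $(\mu_1,\mu_2)$. The condition \eqref{CondStefan1} at the outer interface $r_1$ is structurally identical to the one producing \eqref{xi1} in the Robin case, because it involves only $u_1$ and $u_2$ and not the boundary at $x=0$; solving it for $\erf(\mu_2\sqrt{\alpha_1/\alpha_2})$ yields $\erf(\mu_2\sqrt{\alpha_1/\alpha_2})=H(\mu_1)$ with the same $H$ from \eqref{H}, which forces $\mu_1>z_0$ and gives \eqref{mu2}. The condition \eqref{CondStefan2} at $r_2$, after eliminating $\erf(\mu_1\sqrt{\alpha_1/\alpha_2})-\erf(\mu_2\sqrt{\alpha_1/\alpha_2})$ between the two Stefan conditions exactly as was done for \eqref{ecxi1}, reduces to the scalar equation \eqref{QV} on $(z_0,+\infty)$.

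It then remains to show that \eqref{QV} has a unique solution. The function $Q$ from \eqref{Q} is continuous and strictly increasing on $[0,+\infty)$ with $Q(0)=\tfrac{\ell_1}{\ell_2}\tfrac{\Ste_1}{\sqrt{\pi}}>0$ and $Q(+\infty)=+\infty$; these properties were already established in Section~2.1. For the right-hand side I would analyse $V$ from \eqref{V}: under the standing hypothesis \eqref{hipalpha} the exponent $-z^2(\alpha_1/\alpha_3-\alpha_1/\alpha_2)$ is negative, and since $A>B$ by \eqref{AB}, the first term of $V$ is positive, strictly decreasing in $z$, and blows up as $z\to 0^+$, while the second term $-z\exp(z^2\alpha_1/\alpha_2)$ is strictly decreasing and tends to $-\infty$. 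Hence $V$ is strictly decreasing on $(0,+\infty)$ with $V(0^+)=+\infty$ and $V(+\infty)=-\infty$. Composing with the increasing map $z\mapsto \sqrt{\alpha_2/\alpha_1}\,\erf^{-1}(H(z))$, which sends $z_0$ to $0$ and $+\infty$ to $+\infty$, shows that the right-hand side of \eqref{QV} is strictly decreasing on $(z_0,+\infty)$, tends to $+\infty$ as $z\to z_0^+$ and to $-\infty$ as $z\to+\infty$. By monotonicity and continuity there exists a unique $\mu_1\in(z_0,+\infty)$ solving \eqref{QV}, and \eqref{mu2} then determines $\mu_2$ uniquely.

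The main obstacle is the reduction to the scalar equation \eqref{QV} and the verification that the composed right-hand side blows up at $z_0^+$, which depends crucially on \eqref{AB} and on \eqref{hipalpha}; once these are in place, the monotonicity argument is immediate. Note that no threshold analogous to $h_2$ appears here: in the Dirichlet formulation the boundary temperature $A$ enters linearly in $V$ and already produces $V(0^+)=+\infty$ as soon as $A>B$, so the single inequality \eqref{AB} suffices for existence and uniqueness of the three-phase regime.
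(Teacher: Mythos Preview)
Your proposal is correct. Note, however, that the paper does not actually supply its own proof of this theorem: it introduces the statement with ``From \cite{Wi1978} we can express the solution\ldots'' and defers the existence and uniqueness argument to Wilson's paper on $n$-phase similarity solutions. What you have written is precisely the specialization of the argument given in Section~2.1 for the Robin case, with the simplification that the Dirichlet condition \eqref{dirichlet} makes the first term of $V$ blow up at $0^+$ (whereas in the Robin case the analogous function $T$ stays bounded, forcing the threshold $h_0>h_2$). This is exactly the right way to prove the result directly, and it is consistent with the paper's own treatment of the parallel Robin problem; you have simply filled in what the paper leaves to the cited reference.
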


\begin{rem} In Figure \ref{MapaColores-temperatura}, we plot a colour map of the temperature $$u(x,t)=\left\lbrace \begin{array}{ccll}
u_3(x,t) \quad & \text{if} \quad & 0<x<r_2(t), \; &t>0, \\
u_2(x,t) \quad & \text{if} \quad & r_2(t)<x<r_1(t), \; &t>0, \\
u_1(x,t) \quad & \text{if} \quad & r_1(t)<x, \; &t>0, 
\end{array} \right.$$ described by \eqref{u3}-\eqref{u1} and the free boundaries $x=r_2(t)$ and $x=r_1(t)$ given by \eqref{Fron-mu2} and \eqref{Fron-mu1}, respectively, considering  the following parameters: $A=331\;K$, $B=328\;K$, $C=324\;K$, $D=320\;K$, 
$k_1=0.2\; \tfrac{W}{mK}$,
$k_2=0.2\; \tfrac{W}{mK}$,
$k_3=0.2 \; \tfrac{W}{mK}$,
$c_1=2  \; \tfrac{J}{kg K}$,
$c_2=2 \; \tfrac{J}{kg K}$,
$c_3=2 \; \tfrac{J}{kg K}$,
$\rho=770\; \tfrac{kg}{m^3}$,
$\ell_1=160\; \tfrac{J}{kg}$,
$\ell_2=150\; \tfrac{J}{kg}$.
\end{rem}
\begin{figure}[h!!!!!!!!!!!!]
\begin{center}
\includegraphics[scale=0.27]{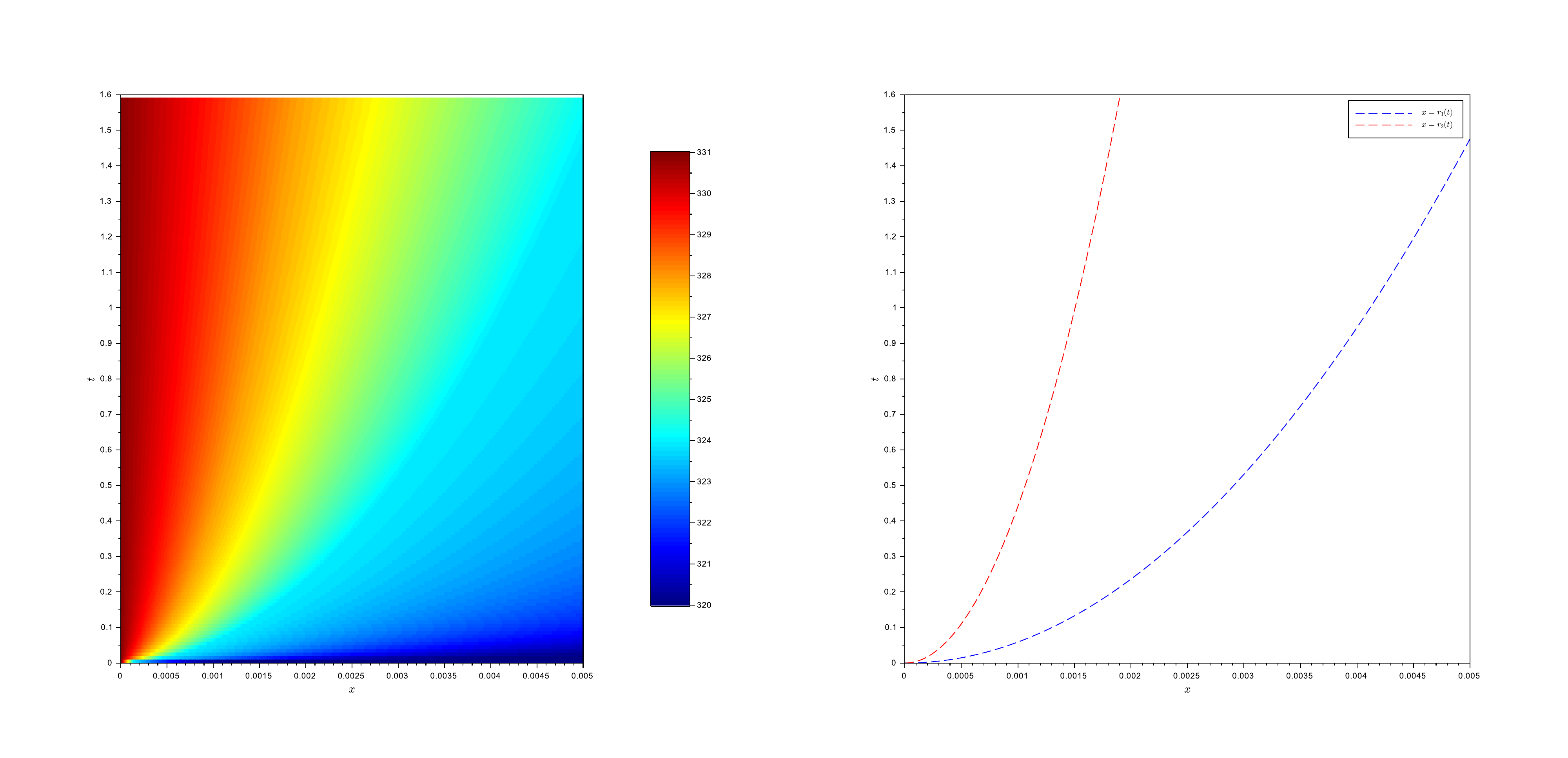}
\caption{Colour map of the temperature and the free boundaries of the three phase Stefan problem with a Dirichlet type condition defined by \eqref{EcCalor-Fase3}-\eqref{w_1 y w_2 0} and \eqref{dirichlet}.}
\label{MapaColores-temperatura}
\end{center}
\end{figure}

Based on \citep{SaTa1989}, we can represent the solution to the three-phase Stefan problem with a Neumann boundary condition at the fixed face $x=0$:
\begin{thm}\label{CasoFlujo}
If the following inequality holds
\begin{equation}\label{q0-q2}
q_0>  q_2:=\tfrac{k_2(B-C)}{\sqrt{\alpha_2\pi}\erf\left(z_0\sqrt{\tfrac{\alpha_1}{\alpha_2}}\right)},
\end{equation}
where $z_0$ is defined by \eqref{z0}, then the problem \eqref{EcCalor-Fase3}-\eqref{w_1 y w_2 0} and \eqref{neumann} has a unique solution  given by 
\begin{align}
& \theta_3(x,t)=B+\tfrac{q_0 \sqrt{\pi \alpha_3}}{k_3} \left(\erf\left(\lambda_2  \sqrt{\tfrac{\alpha_1}{\alpha_3}}\right)-\erf\left(\tfrac{x}{2\sqrt{\alpha_3 t}}\right)\right),& 0<x<s_2(t),\; t>0, \label{theta3}\\
&  \theta_2(x,t)=C+(B-C)\tfrac{\erf\left(\lambda_1  \sqrt{\tfrac{\alpha_1}{\alpha_2}}\right)-\erf\left(\tfrac{x}{2\sqrt{\alpha_2 t}}\right)}{\erf\left(\lambda_1  \sqrt{\tfrac{\alpha_1}{\alpha_2}}\right)-\erf\left(\lambda_2  \sqrt{\tfrac{\alpha_1}{\alpha_2}}\right)}, & s_2(t)<x<s_1(t),\; t>0, \label{theta2}\\
&  \theta_1(x,t)=D+(C-D) \frac{\erfc\left(\frac{x}{2\sqrt{\alpha_1 t}}\right)}{\erfc(\lambda_1)} ,& x>s_1(t),\;t>0.\label{theta1}\\
& s_2(t)= 2 \lambda_2 \sqrt{\alpha_1 t}, & t>0,\label{Fron-lambda2}\\
& s_1(t)= 2 \lambda_1 \sqrt{\alpha_1 t}, & t>0,\label{Fron-lambda1}
\end{align}
 where
\begin{equation}
\lambda_2=\sqrt{\tfrac{\alpha_2}{\alpha_1}}\erf^{-1}\left( H(\lambda_1)\right), \label{lambda2}
\end{equation}
and $\lambda_1$ is the unique solution to 
\begin{equation}\label{lambda1}
Q(z)=P\left(\sqrt{\tfrac{\alpha_2}{\alpha_1}}\erf^{-1}\left(H(z)\right)\right), \qquad z \geq 0, 
\end{equation}
with $Q$  given by \eqref{Q}, and
\begin{equation}\label{P}
P(z)=\exp\left(z^2\tfrac{\alpha_1}{\alpha_2}\right)\left[-z+\tfrac{q_0}{\ell_2}\sqrt{\tfrac{c_1}{\rho k_1}} \exp\left(-z^2\tfrac{\alpha_1}{\alpha_3}\right)\right],\qquad z \geq 0.
\end{equation}
\end{thm}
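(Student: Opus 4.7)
The plan is to mirror the argument used for Theorem \ref{Teo:ExistenciaConvectivo}, replacing the Robin condition \eqref{convectiva} by the Neumann condition \eqref{neumann} and tracking how the analysis of the resulting transcendental equation changes. I begin by proposing a similarity ansatz of the same form as \eqref{v3}--\eqref{w1} for the unknowns $\theta_i$ and the free boundaries $s_i$, with dimensionless parameters $\lambda_1,\lambda_2$ to be determined. The heat equations \eqref{EcCalor-Fase3}--\eqref{EcCalor-Fase1} are satisfied automatically by construction, and conditions \eqref{B}, \eqref{C}, \eqref{D} together with the Neumann condition \eqref{neumann} give four linear relations among the six constants $A_i,B_i$. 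Solving them explicitly yields formulas \eqref{theta3}--\eqref{theta1}; in particular, the Neumann condition evaluated at $x=0$ fixes $B_3=-q_0\sqrt{\pi\alpha_3}/k_3$, and the condition $\theta_3(s_2(t),t)=B$ then gives the form in \eqref{theta3}.

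Next, I would impose the two Stefan conditions \eqref{CondStefan2}, \eqref{CondStefan1}. Condition \eqref{CondStefan1}, involving only $\theta_1$ and $\theta_2$, has the same structure as in the Robin case and can therefore be written as $\erf(\lambda_2\sqrt{\alpha_1/\alpha_2})=H(\lambda_1)$ with $H$ defined in \eqref{H}; since $H$ is increasing with $H(0)<0$ and $H(+\infty)=1$, this yields \eqref{lambda2} provided $\lambda_1>z_0=H^{-1}(0)$. Substituting \eqref{lambda2} into condition \eqref{CondStefan2} and using the explicit dependence of $\theta_3$ on $q_0$ (instead of on $h_0$ and $A_\infty$), the right-hand side no longer contains the ratio $(A_\infty-B)/(\ldots+\erf(\cdot))$ but a simpler exponential term, and a direct rearrangement produces exactly the equation \eqref{lambda1} with $P$ as in \eqref{P}. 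Thus the original free boundary problem is reduced to showing that \eqref{lambda1} has a unique solution $\lambda_1>z_0$.

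For the monotonicity analysis, $Q$ from \eqref{Q} is strictly increasing with $Q(0)=\tfrac{\ell_1}{\ell_2}\tfrac{\Ste_1}{\sqrt{\pi}}>0$ and $Q(+\infty)=+\infty$, exactly as in the convective case. For the right-hand side, I would differentiate $P$ in \eqref{P}: since the assumption $\alpha_2>\alpha_3$ makes the factor $\exp\bigl(-z^2\alpha_1(1/\alpha_3-1/\alpha_2)\bigr)$ decreasing and the term $-z\exp(z^2\alpha_1/\alpha_2)$ decreasing, $P$ itself is strictly decreasing on $[0,+\infty)$ with $P(+\infty)=-\infty$. Composing with the increasing map $z\mapsto\sqrt{\alpha_2/\alpha_1}\erf^{-1}(H(z))$ preserves this monotonicity, so the right-hand side of \eqref{lambda1} is strictly decreasing in $z$ on $(z_0,+\infty)$. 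Existence and uniqueness therefore follow if and only if the value of the right-hand side at $z=z_0$ exceeds $Q(z_0)$.

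The main (and only) substantive step is to show this last inequality is exactly \eqref{q0-q2}. At $z=z_0$, one has $H(z_0)=0$, so $\erf^{-1}(H(z_0))=0$ and hence $P\bigl(\sqrt{\alpha_2/\alpha_1}\erf^{-1}(H(z_0))\bigr)=P(0)=\tfrac{q_0}{\ell_2}\sqrt{c_1/(\rho k_1)}$. On the other hand, the identity $H(z_0)=0$ can be solved for $\varphi(z_0)\exp(z_0^2\alpha_1/\alpha_2)$, which after insertion in \eqref{Q} yields $Q(z_0)=\tfrac{B-C}{\ell_2\sqrt{\pi}}\sqrt{k_2c_1c_2/k_1}\bigl/\erf(z_0\sqrt{\alpha_1/\alpha_2})$. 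Demanding $P(0)>Q(z_0)$ and using $\alpha_2=k_2/(\rho c_2)$ reduces precisely to $q_0>q_2$ with $q_2$ as in \eqref{q0-q2}. Collecting everything then gives the stated explicit formulas \eqref{theta3}--\eqref{Fron-lambda1} with $\lambda_2$ defined by \eqref{lambda2} and $\lambda_1$ as the unique root of \eqref{lambda1}, completing the proof.
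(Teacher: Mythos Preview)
Your proposal is correct and follows essentially the same approach as the paper: the paper itself does not prove Theorem~\ref{CasoFlujo} in detail but instead cites \cite{SaTa1989} and presents the result, whose structure parallels the proof of Theorem~\ref{Teo:ExistenciaConvectivo} exactly as you outline (similarity ansatz, reduction of \eqref{CondStefan1} to $\erf(\lambda_2\sqrt{\alpha_1/\alpha_2})=H(\lambda_1)$, reduction of \eqref{CondStefan2} to \eqref{lambda1}, and the monotonicity argument under $\alpha_2>\alpha_3$). Your verification that the inequality $P(0)>Q(z_0)$ is equivalent to $q_0>q_2$ is precisely the step analogous to the derivation of \eqref{h2} in the convective case.
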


\begin{rem}
In  \cite{Ta1981-1982}, a two-phase Stefan problem with a Neumann condition at the fixed face was studied. It was proved that for a unique solution to exist,  $q_0$ must satisfy the following inequality $q_0>q_1:=\tfrac{k_1(C-D)}{\sqrt{\pi \alpha_1}}$.
In the case of three phases, we have proven that $q_0>q_2$  and it is easy to see that the following relation is satisfied:
$$q_0>q_2>q_1.$$

If the coefficient \(q_0 \) satisfies the following conditions:

\begin{enumerate}
    \item \( 0 < q_0 \leq q_1 \), then the problem defined by \eqref{EcCalor-Fase3}-\eqref{w_1 y w_2 0} and \eqref{neumann} reduces to a classical heat transfer problem for the initial solid phase.
    \item \( q_1 < q_0 \leq q_2 \), then the problem defined by \eqref{EcCalor-Fase3}-\eqref{w_1 y w_2 0} and \eqref{neumann} becomes a two-phase Stefan problem.
    \item \( q_0 > q_2 \), then the problem defined by \eqref{EcCalor-Fase3}-\eqref{w_1 y w_2 0} and \eqref{neumann} is a three-phase Stefan problem whose unique solution is given in Theorem \ref{CasoFlujo}.
\end{enumerate}
\end{rem}

\begin{rem} In Figure \ref{MapaColores-flujo}, we plot a colour map of the temperature $$\theta(x,t)=\left\lbrace \begin{array}{ccll}
\theta_3(x,t) \quad & \text{if} \quad & 0<x<s_2(t), \; &t>0, \\
\theta_2(x,t) \quad & \text{if} \quad & s_2(t)<x<s_1(t), \; &t>0, \\
\theta_1(x,t) \quad & \text{if} \quad & s_1(t)<x, \; &t>0, 
\end{array} \right.$$ described by \eqref{theta3}-\eqref{theta1} and the free boundaries $x=s_2(t)$ and $x=s_1(t)$ given by \eqref{Fron-lambda2} and \eqref{Fron-lambda1}, respectively, considering  the following parameters: $q_0=300 \; \tfrac{kg}{s^{{5/2}}}$, $B=328\;K$, $C=324\;K$, $D=320\;K$, 
$k_1=0.2\; \tfrac{W}{mK}$,
$k_2=0.2\; \tfrac{W}{mK}$,
$k_3=0.2 \; \tfrac{W}{mK}$,
$c_1=2  \; \tfrac{J}{kg K}$,
$c_2=2 \; \tfrac{J}{kg K}$,
$c_3=2 \; \tfrac{J}{kg K}$,
$\rho=770\; \tfrac{kg}{m^3}$,
$\ell_1=160\; \tfrac{J}{kg}$,
$\ell_2=150\; \tfrac{J}{kg}$.
\end{rem}
\begin{figure}[h!!!!!!!!!!!!]
\begin{center}
\includegraphics[scale=0.27]{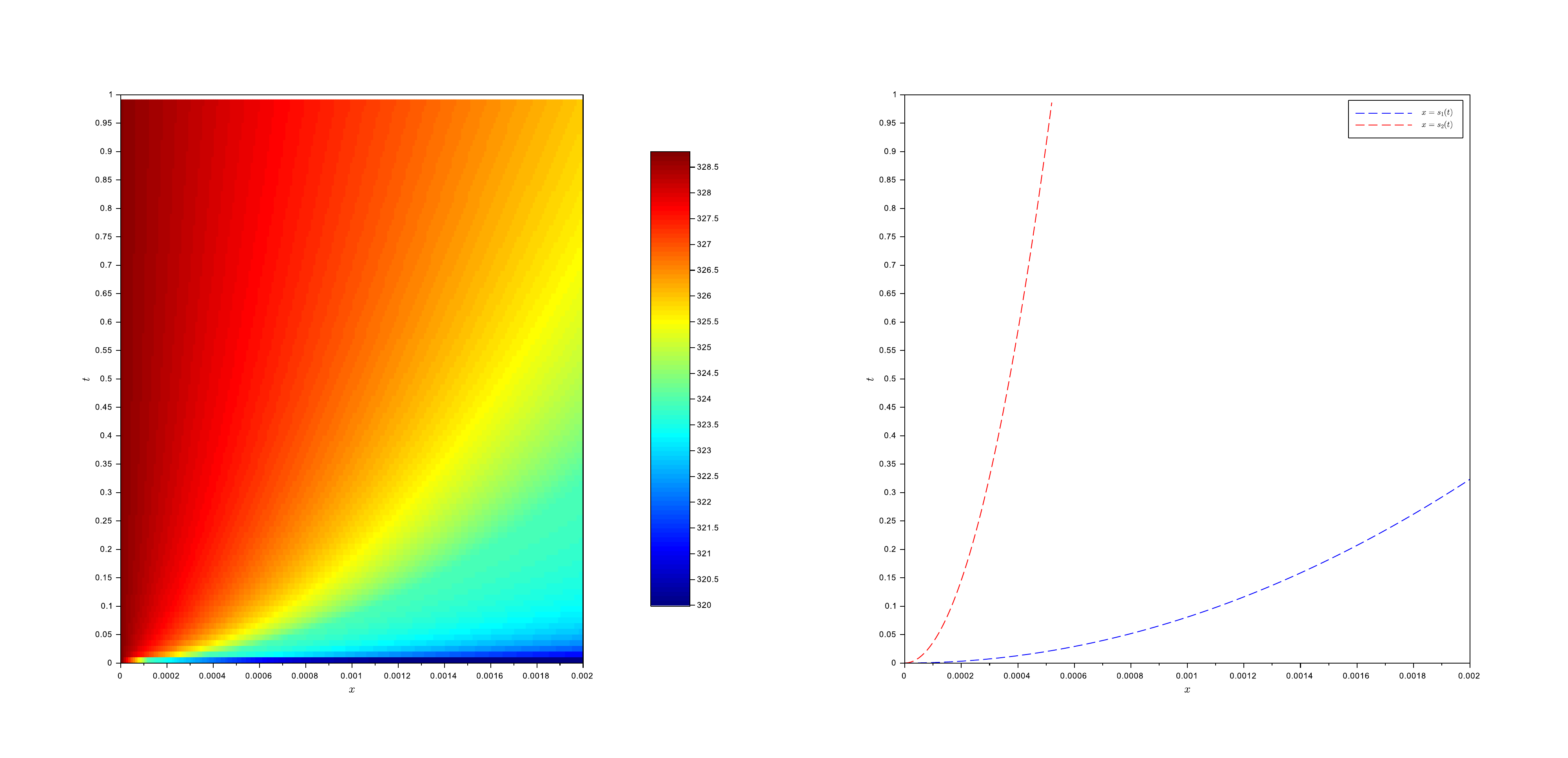}
\caption{Colour map of the temperature and the free boundaries of the three phase Stefan problem with a Neumann type condition defined by \eqref{EcCalor-Fase3}-\eqref{w_1 y w_2 0} and \eqref{neumann}.}
\label{MapaColores-flujo}
\end{center}
\end{figure}

\section{Relationship among problems}
From this point forward, we denote the problem governed by  \eqref{EcCalor-Fase3}-\eqref{w_1 y w_2 0} and \eqref{convectiva} as \textbf{(P1)}. If we substitute the Robin boundary condition \eqref{convectiva} with a temperature boundary condition, we obtain the problem defined by \eqref{EcCalor-Fase3}-\eqref{w_1 y w_2 0}, and \eqref{dirichlet}, which we will denote as \textbf{(P2)}. Similarly, we define problem \textbf{(P3)} as given by \eqref{EcCalor-Fase3}-\eqref{w_1 y w_2 0}, and \eqref{neumann}, which arises from replacing the condition \eqref{convectiva} with a Neumann boundary condition.

Having established our three problems, we will now demonstrate the equivalence among them. By equivalence, we refer to the condition in which, if the data of both problems satisfy a specific relationship, then they will yield the same solution. This will be shown through a detailed analysis of the boundary conditions and their implications for the solutions of the respective problems. We would like to emphasize that the results presented in this section are theoretical and applicable to all phase-change materials, with the possibility of experimental verification.

\subsection{Equivalence between problems \emph{\textbf{(P1)}} and \emph{\textbf{(P2)}}}

Linking a Stefan problem characterized by a temperature boundary condition to the  one with a convective boundary condition is essential for a deeper comprehension of heat transfer processes. Exploring how these conditions interact offers a broader perspective on the thermal dynamics of the system. 

In the subsequent theorem, we define the necessary relationships between the parameters of both problems to guarantee their equivalence.

\begin{thm}\label{Teor31} $ $
\begin{enumerate}
\item[\emph{a)}] Let $h_0$ and $A_\infty$ with $h_0 > h_2$ be  the given constants of the convective condition in the problem \emph{\textbf{(P1)}} where $h_2$ is defined by \eqref{h2}.  
If the following inequality holds:
\begin{equation}\label{condA}
\tfrac{\frac{Bk_3}{h_0\sqrt{\pi\alpha_3}}+A_\infty\erf\left(\xi_2\sqrt{\tfrac{\alpha_1}{\alpha_3}} \right)}{\frac{k_3}{h_0\sqrt{\pi \alpha_3}}+\erf\left(\xi_2\sqrt{\tfrac{\alpha_1}{\alpha_3}} \right)}>B,
\end{equation}
where  $\xi_2$ is given by \eqref{xi2-convectivo}, then the solution to problem \textbf{\emph{(P2)}} with
\begin{equation}\label{A}
A=\tfrac{\frac{Bk_3}{h_0\sqrt{\pi\alpha_3}}+A_\infty\erf\left(\xi_2\sqrt{\tfrac{\alpha_1}{\alpha_3}} \right)}{\frac{k_3}{h_0\sqrt{\pi \alpha_3}}+\erf\left(\xi_2\sqrt{\tfrac{\alpha_1}{\alpha_3}} \right)},
\end{equation}
 coincides with the solution to problem \textbf{\emph{(P1)}}.
\item[\emph{b)}] Let  $A>B$ be the data of the temperature boundary condition of the problem \textbf{\emph{(P2)}}. If the following inequality holds:
\begin{equation}\label{condh0}
\tfrac{A-B}{(A_{\infty}-A)\erf\left( \mu_2 \sqrt{\tfrac{\alpha_1}{\alpha_3}} \right)}>\tfrac{B-C}{A_{\infty}-B} \sqrt{\tfrac{k_2c_2}{k_3c_3}}\tfrac{1}{\erf\left(z_0 \sqrt{\frac{\alpha_1}{\alpha_2}}\right)},
\end{equation}
where $\mu_2$ is given by \eqref{mu2} and  $A_\infty>A$, then the solution to problem \textbf{\emph{(P1)}} with
\begin{equation}\label{h0equiv}
h_0=\tfrac{k_3}{\sqrt{\alpha_3 \pi}} \tfrac{A-B}{A_\infty-A} \tfrac{1}{\erf\left(\mu_2 \sqrt{\tfrac{\alpha_1}{\alpha_3}}  \right) },
\end{equation}
 coincides with the solution to problem \textbf{\emph{(P2)}}.
\end{enumerate}
\end{thm}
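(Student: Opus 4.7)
The plan rests on the observation that problems \textbf{(P1)} and \textbf{(P2)} differ only in the condition imposed at the fixed face $x=0$; all the heat equations \eqref{EcCalor-Fase3}--\eqref{EcCalor-Fase1}, the interface temperatures \eqref{B}--\eqref{C}, the far-field/initial condition \eqref{D}, the Stefan conditions \eqref{CondStefan2}--\eqref{CondStefan1}, and the initial positions \eqref{w_1 y w_2 0} coincide. Consequently, to prove equivalence I would show that, under the stated hypotheses, the unique solution of one problem automatically verifies the boundary condition of the other, and then invoke the uniqueness part of Theorem~\ref{Teo:ExistenciaConvectivo} or of Theorem~\ref{CasoTemperatura} to conclude.

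For part (a), I would start from the unique solution $(v_1,v_2,v_3,w_1,w_2)$ of \textbf{(P1)} furnished by Theorem~\ref{Teo:ExistenciaConvectivo}. Evaluating \eqref{v3bis} at $x=0$ and using $\erf(0)=0$ yields exactly the right-hand side of \eqref{A}, so $v_3(0,t)$ equals the constant $A$ defined there. The triple therefore satisfies every equation of \textbf{(P2)} with the prescribed boundary temperature $A$. To invoke the uniqueness part of Theorem~\ref{CasoTemperatura} one only needs $A>B$, and this is precisely what \eqref{condA} asserts; uniqueness then forces $(u,r_1,r_2)=(v,w_1,w_2)$.

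For part (b), I would begin with the unique solution $(u_1,u_2,u_3,r_1,r_2)$ of \textbf{(P2)} given by Theorem~\ref{CasoTemperatura}. A direct differentiation of \eqref{u3} gives
\begin{equation*}
\partial_x u_3(0,t)=\tfrac{B-A}{\erf(\mu_2\sqrt{\alpha_1/\alpha_3})\sqrt{\pi\alpha_3\,t}},
\end{equation*}
and substituting this together with $u_3(0,t)=A$ into the Robin condition \eqref{convectiva} solves uniquely for $h_0$, producing precisely formula \eqref{h0equiv}. Hence $(u,r_1,r_2)$ solves \textbf{(P1)} with this value of $h_0$ and the given $A_\infty$. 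To apply the uniqueness part of Theorem~\ref{Teo:ExistenciaConvectivo} I need $h_0>h_2$; an algebraic rearrangement, using the definition \eqref{h2} of $h_2$ and simplifying the prefactor $\tfrac{k_3}{\sqrt{\pi\alpha_3}}\big/\sqrt{k_2k_3c_2/(\pi c_3\alpha_3)}=\sqrt{k_3c_3/(k_2c_2)}$, shows that this inequality is exactly the stated hypothesis \eqref{condh0}, after which uniqueness yields the claimed coincidence.

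The only step I expect to be genuinely delicate is this last equivalence between \eqref{condh0} and $h_0>h_2$, because it mixes the dimensionless parameter $\mu_2$ of \textbf{(P2)} with the constant $z_0$ appearing in the definition of $h_2$; however, since both quantities are explicit and $z_0$ depends only on material parameters (not on the boundary data), the rearrangement is purely algebraic. Everything else follows directly from the explicit similarity formulas and the uniqueness results already established.
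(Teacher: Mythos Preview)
Your proposal is correct and follows essentially the same strategy as the paper: compute the boundary value (or flux) of the known solution at $x=0$, observe that it produces exactly the boundary condition of the other problem, and then invoke uniqueness. The paper carries out the uniqueness step slightly differently---instead of appealing directly to Theorems~\ref{Teo:ExistenciaConvectivo} and~\ref{CasoTemperatura} at the PDE level, it rewrites the algebraic system for $(\mu_1,\mu_2)$ (resp.\ $(\xi_1,\xi_2)$) with the new data, checks that $(\xi_1,\xi_2)$ (resp.\ $(\mu_1,\mu_2)$) solves it, and then uses uniqueness of that algebraic system---but this is just a lower-level restatement of the same idea, and your algebraic verification that \eqref{condh0} is equivalent to $h_0>h_2$ is exactly what is needed.
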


\begin{proof} $ $
\begin{enumerate}
\item[a)] Based on the solution to the problem \textbf{(P1)}, which involves the temperatures $v_3$, $v_2$, and $v_1$ and the free boundaries  $w_2$ and $w_1$ established  by Theorem \ref{Teo:ExistenciaConvectivo}, it is possible to determine the temperature at $x = 0$:

%From the convective boundary condition at $x = 0$ in the three-phase Stefan problem \textbf{(P1)}, characterized by the parameters $h_0$ and $A_\infty$ with the condition $h_0 > h_2$, and considering the coefficients $\xi_1$ and $\xi_2$ associated with the two free boundaries $w_1(t)$ and $w_2(t)$, as well as the temperatures $v_1$, $v_2$, and $v_3$, it is possible to determine the temperature at $x = 0$. Specifically, we can express the temperature at this boundary as a function of the aforementioned parameters and conditions.
\begin{equation}
v_3(0,t)=\tfrac{\frac{Bk_3}{h_0 \sqrt{\pi \alpha_3}}+A_{\infty}\erf\left(\xi_2 \sqrt{\frac{\alpha_1}{\alpha_3}}\right)}{\frac{k_3}{h_0 \sqrt{\pi \alpha_3}}+\erf\left(\xi_2 \sqrt{\frac{\alpha_1}{\alpha_3}}\right)}, 
\end{equation}
then
\begin{equation}
v_3(0,t)-B=\tfrac{\left(A_{\infty}-B\right)\erf\left(\xi_2 \sqrt{\frac{\alpha_1}{\alpha_3}}\right)}{\frac{k_3}{h_0 \sqrt{\pi \alpha_3}}+\erf\left(\xi_2 \sqrt{\frac{\alpha_1}{\alpha_3}}\right)}>0, 
\end{equation}
%and
%\begin{equation}
%-v_3(0,t)+A_{\infty}=\tfrac{\left(A_{\infty}-B\right)\frac{k_3}{h_0 \sqrt{\pi \alpha_3}}}{\frac{k_3}{h_0 \sqrt{\pi \alpha_3}}+\erf(\xi_2 \sqrt{\frac{\alpha_1}{\alpha_3}})}>0. 
%\end{equation}

Taking into account that $v_3(0,t)>B$ we can formulate the three-phase Stefan problem \textbf{(P2)} with a temperature condition at $x=0$ given by
$A=v_3(0,t)$ defined by \eqref{A}.

Then, for this data, we can rewrite the solution to problem \textbf{(P2)} as
$$ u_3(x,t)=\tfrac{\frac{Bk_3}{h_0 \sqrt{\pi \alpha_3}}+A_{\infty}\erf\left(\xi_2 \sqrt{\frac{\alpha_1}{\alpha_3}}\right)}{\frac{k_3}{h_0 \sqrt{\pi \alpha_3}}+\erf\left(\xi_2 \sqrt{\frac{\alpha_1}{\alpha_3}}\right)}\tfrac{\erf\left(\mu_2  \sqrt{\tfrac{\alpha_1}{\alpha_3}}\right)-\erf\left(\tfrac{x}{2\sqrt{\alpha_3 t}}\right)}{\erf\left(\mu_2  \sqrt{\tfrac{\alpha_1}{\alpha_3}}\right)}+B\tfrac{\erf\left(\tfrac{x}{2\sqrt{\alpha_3 t}}\right)}{\erf\left(\mu_2  \sqrt{\tfrac{\alpha_1}{\alpha_3}}\right)},\quad 0<x<r_2(t), \; t>0,$$
and  (\ref{u2})-(\ref{QV}).

Taking into account  (\ref{Ste}), \eqref{A}, and the equations  \eqref{mu2}, \eqref{QV}, it follows that the coefficients $\mu_1$ and $\mu_2$ constitute the unique solution to the following system of equations
\begin{equation}\label{sistemaP1P2n}
\left\lbrace
\begin{array}{llll}
& \tfrac{\ell_1}{\ell_2} \varphi(z_1) \exp\left(z_1^2 \tfrac{\alpha_1}{\alpha_2}\right) =\tfrac{\Ste_2}{c_2} \tfrac{A_{\infty} - B}{B - C} \sqrt{\tfrac{c_1 c_3 k_3}{k_1 \pi}} \tfrac{\erf\left(\xi_2 \sqrt{\tfrac{\alpha_1}{\alpha_3}}\right)} {\erf\left(z_2 \sqrt{\tfrac{\alpha_1}{\alpha_3}}\right)} \tfrac{\exp\left(-z_2^2\alpha_1 \left(\tfrac{1}{\alpha_3}-\tfrac{1}{\alpha_2}\right)\right)}{\frac{k_3}{h_0 \sqrt{\pi \alpha_3}}+\erf\left(\xi_2 \sqrt{\tfrac{\alpha_1}{\alpha_3}}\right)} - z_2 \exp\left(z_2^2 \tfrac{\alpha_1}{\alpha_2}\right), \\
& z_2  =\sqrt{\tfrac{\alpha_2}{\alpha_1}} \erf^{-1}(H(z_1)).
\end{array}
\right.
\end{equation}

Considering that \( \xi_2 \) is given by  \eqref{xi2} and that \( \xi_1 \) is the unique solution to \eqref{xi1}, it follows that $\xi_1$ and $\xi_2$ constitute a solution of the system \eqref{sistemaP1P2n}. By uniqueness, we obtain that \( \xi_1 = \mu_1 \) and \( \xi_2 = \mu_2 \).
From this fact, it follows immediately that $v_i(x,t)=u_i(x,t)$ for $i=1,2,3$.

\item[b)]  From the temperatures \( u_3 \), \( u_2 \), and \( u_1 \), along with the free boundaries \( r_2 \) and \( r_1 \) that represent the unique solution to problem \textbf{(P2)} as specified by \eqref{u3}-\eqref{V}, we obtain
$$
k_3 \frac{\partial u_3}{\partial x}(0,t) = \tfrac{-(A-B)k_3}{\sqrt{\pi \alpha_3} \, \erf\left( \mu_2 \sqrt{\tfrac{\alpha_1}{\alpha_3}} \right)}  \tfrac{1}{\sqrt{t}},
$$
and therefore we can compute the convective condition at the fixed face $x=0$ given by \eqref{convectiva} with $h_0$ given by \eqref{h0equiv}
for some value $A_\infty>A$ such that \eqref{condh0} holds. Then the unique solution to the  problem \textbf{(P1)} with $h_0$ defined by \eqref{h0equiv} is given by
\begin{equation}\label{v3-equiv}
v_3(x,t)=\tfrac{ \tfrac{B(A_\infty-A)}{A-B} \erf\left(\mu_2 \sqrt{\tfrac{\alpha_1}{\alpha_3}} \right) +A_\infty \erf\left(\xi_2 \sqrt{\tfrac{\alpha_1}{\alpha_3}} \right) -(A_\infty-B) \erf\left(\tfrac{x}{2\sqrt{\alpha_3 t}} \right) }{ \tfrac{A_\infty-A}{A-B} \erf\left(\mu_2 \sqrt{\tfrac{\alpha_1}{\alpha_3}} \right)  +\erf\left(\xi_2 \sqrt{\tfrac{\alpha_1}{\alpha_3}} \right)}, \quad 0<x<w_2(t),\; t>0,
\end{equation}
and \eqref{w2}-\eqref{w1} and \eqref{v2bis}-\eqref{v1bis}.
In addition, taking into account  (\ref{Ste}), \eqref{xi2-convectivo} and \eqref{ecxi1}, the coefficients  $\xi_1$ and $\xi_2$ constitute the unique solution to the following system of equations
\begin{equation}\label{sistemaP2P1}
\left\lbrace
\begin{array}{llll}
& \tfrac{\ell_1}{\ell_2} \varphi(z_1) \exp\left(z_1^2 \tfrac{\alpha_1}{\alpha_2}\right) =\tfrac{\Ste_2}{c_2}  \tfrac{A_{\infty} - B}{B - C} \sqrt{\tfrac{c_1 c_3 k_3}{k_1 \pi}} \tfrac{\exp\left(-z_2^2\alpha_1 \left(\tfrac{1}{\alpha_3}-\tfrac{1}{\alpha_2}\right)\right)}{\erf\left(z_2 \sqrt{\tfrac{\alpha_1}{\alpha_3}}\right)+\tfrac{A_\infty-A}{A-B}\erf\left(\mu_2 \sqrt{\tfrac{\alpha_1}{\alpha_3}}\right) } - z_2 \exp\left(z_2^2 \tfrac{\alpha_1}{\alpha_2}\right), \\
& z_2  =\sqrt{\tfrac{\alpha_2}{\alpha_1}} \erf^{-1}(H(z_1)).
\end{array}
\right.
\end{equation}
Considering that \( \mu_2 \) is given by  \eqref{mu2} and that \( \mu_1 \) is the unique solution to \eqref{QV}, it follows that $\mu_1$ and $\mu_2$ constitute a solution to \eqref{sistemaP2P1}. By uniqueness, we obtain that \( \mu_1 = \xi_1 \) and \( \mu_2 = \xi_2 \).
From this fact, it follows immediately that $u_i(x,t)=v_i(x,t)$ for $i=1,2,3$.
\end{enumerate}

\end{proof}

From the previous theorem, the equivalence of problems \textbf{(P1)} and \textbf{(P2)} arises under certain conditions regarding the data; therefore, the following relationships can be established. This insight highlights that, under specific circumstances, two distinct problems share a common solution framework, which can be crucial for simplifying analyses or applications in various fields.

\begin{cor} Let  $A>B$ be the data of the temperature boundary condition of the problem \textbf{\emph{(P2)}}. The coefficient $\mu_2$ that characterizes the free boundary $r_2$  given by \eqref{mu2} satisfies the following inequality:
\begin{equation}\label{inec-mu2}
\erf\left(\mu_2\sqrt{\tfrac{\alpha_1}{\alpha_3}}\right)<\sqrt{\tfrac{k_3c_3}{k_2c_2}}\tfrac{A-B}{B-C} \tfrac{A_\infty-B}{A_\infty-A}\erf\left( z_0\sqrt{\tfrac{\alpha_1}{\alpha_2}} \right), \qquad \forall A_\infty >A,
\end{equation}
where $z_0$ is given by \eqref{z0}.
\end{cor}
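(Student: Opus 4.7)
Plan of proof. My strategy is to bound $\erf(\mu_2\sqrt{\alpha_1/\alpha_3})$ directly from the Stefan condition \eqref{CondStefan2} applied at the inner interface $r_2(t)$ of problem \textbf{(P2)}. Computing $\partial_x u_3$ and $\partial_x u_2$ at $x=r_2(t)$ from \eqref{u3}--\eqref{u2}, inserting them in \eqref{CondStefan2}, and cancelling the common factor $1/\sqrt{t}$, one may solve for the desired error function:
\begin{equation*}
\erf\!\left(\mu_2\sqrt{\tfrac{\alpha_1}{\alpha_3}}\right)=\frac{k_3(A-B)\exp(-\mu_2^2\alpha_1/\alpha_3)/\sqrt{\pi\alpha_3}}{\delta_2\mu_2\sqrt{\alpha_1}+\dfrac{k_2(B-C)\exp(-\mu_2^2\alpha_1/\alpha_2)}{[\erf(\mu_1\sqrt{\alpha_1/\alpha_2})-\erf(\mu_2\sqrt{\alpha_1/\alpha_2})]\sqrt{\pi\alpha_2}}}.
\end{equation*}
Both summands in the denominator are strictly positive: the inequality $\mu_1>\mu_2$ holds by the argument already used right after \eqref{xi2-convectivo}, since $\erf(\mu_2\sqrt{\alpha_1/\alpha_2})=H(\mu_1)<\erf(\mu_1\sqrt{\alpha_1/\alpha_2})$.

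Three reductions then give the $A_\infty$-free form of \eqref{inec-mu2}. First, dropping $\delta_2\mu_2\sqrt{\alpha_1}$ from the denominator yields a strict upper bound. Second, the remaining exponential prefactor $\exp(\mu_2^2\alpha_1(1/\alpha_2-1/\alpha_3))$ is strictly less than $1$ by hypothesis \eqref{hipalpha}. Third, using the relation $\erf(\mu_2\sqrt{\alpha_1/\alpha_2})=H(\mu_1)$ from \eqref{mu2} together with the definition \eqref{H} of $H$, the bracket rewrites as
\begin{equation*}
\erf\!\left(\mu_1\sqrt{\tfrac{\alpha_1}{\alpha_2}}\right)-\erf\!\left(\mu_2\sqrt{\tfrac{\alpha_1}{\alpha_2}}\right)=\tfrac{\Ste_2}{\sqrt{\pi}}\tfrac{\ell_2}{\ell_1}\sqrt{\tfrac{k_2c_1}{k_1c_2}}\frac{\exp(-\mu_1^2\alpha_1/\alpha_2)}{\varphi(\mu_1)}.
\end{equation*}
The map $z\mapsto \exp(-z^2\alpha_1/\alpha_2)/\varphi(z)$ is strictly decreasing on $[0,\infty)$, and since $\mu_1>z_0$ by construction (cf.\ \eqref{QV}) and its value at $z_0$ equals $\erf(z_0\sqrt{\alpha_1/\alpha_2})$ precisely because $H(z_0)=0$, the bracket is strictly less than $\erf(z_0\sqrt{\alpha_1/\alpha_2})$.

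Combining the three reductions and invoking the elementary identity $\frac{k_3\sqrt{\alpha_2}}{k_2\sqrt{\alpha_3}}=\sqrt{\frac{k_3c_3}{k_2c_2}}$, which follows from $\alpha_i=k_i/(\rho c_i)$, one obtains
\begin{equation*}
\erf\!\left(\mu_2\sqrt{\tfrac{\alpha_1}{\alpha_3}}\right)<\sqrt{\tfrac{k_3c_3}{k_2c_2}}\tfrac{A-B}{B-C}\erf\!\left(z_0\sqrt{\tfrac{\alpha_1}{\alpha_2}}\right),
\end{equation*}
which is precisely the limiting form of \eqref{inec-mu2} as $A_\infty\to\infty$. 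The proof is then completed by observing that $(A_\infty-B)/(A_\infty-A)>1$ for every $A_\infty>A$ (since $A>B$), so inserting this factor on the right-hand side only enlarges it and gives \eqref{inec-mu2}. The main technical obstacle is the monotonicity of $\varphi$, which boils down to the classical fact that $e^{-z^2}/\erfc(z)$ is strictly increasing on $[0,\infty)$ (equivalently, the decay of the Mills ratio); this follows from the elementary estimate $z\,\erfc(z)<e^{-z^2}/\sqrt{\pi}$ for $z\geq 0$ and should be cited or briefly verified by differentiation.
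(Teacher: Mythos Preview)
Your proof is correct but follows a genuinely different route from the paper's. The paper's argument is a two-line consequence of Theorem~\ref{Teor31}(b): given the solution to \textbf{(P2)}, one checks that it also solves \textbf{(P1)} with the specific $h_0$ of \eqref{h0equiv}; since a three-phase solution to \textbf{(P1)} exists only when $h_0>h_2$ (Theorem~\ref{Teo:ExistenciaConvectivo}), the inequality $h_0>h_2$ must hold, and unpacking it is exactly \eqref{inec-mu2}. In other words, the paper leverages the equivalence machinery of Section~3 and the existence threshold for \textbf{(P1)}.

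You instead work entirely inside \textbf{(P2)}: you read off the Stefan balance \eqref{CondStefan2} at $r_2(t)$, isolate $\erf(\mu_2\sqrt{\alpha_1/\alpha_3})$, and estimate it via three elementary reductions (dropping the positive term $\delta_2\mu_2\sqrt{\alpha_1}$, using $\alpha_2>\alpha_3$, and comparing $\exp(-z^2\alpha_1/\alpha_2)/\varphi(z)$ at $\mu_1$ and $z_0$ through the monotonicity of $\varphi$ and the defining relation $H(z_0)=0$). This yields directly the sharper, $A_\infty$-free bound that the paper records only afterwards as Remark~\eqref{inec-mu2-1}; you then recover \eqref{inec-mu2} by inserting the trivial factor $(A_\infty-B)/(A_\infty-A)>1$. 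So the two approaches are in a sense dual: the paper proves \eqref{inec-mu2} and lets $A_\infty\to\infty$ to get \eqref{inec-mu2-1}, whereas you prove \eqref{inec-mu2-1} first and inflate it to \eqref{inec-mu2}. Your argument is more self-contained and does not need the equivalence theorem, at the cost of a small monotonicity check for $\varphi$; the paper's argument is shorter but relies on the surrounding structure and better illustrates why the corollary is placed where it is.
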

\begin{proof}
From part b) of the previous Theorem, it follows that $h_0$, as given by \eqref{h0equiv}, must satisfy $h_0 > h_2$, where \( h_2 \) is defined by \eqref{h2}. Therefore, the coefficients $\mu_2$ that characterize the interface $x = r_2(t)$ of the solution to the problem \textbf{(P2)}, must also satisfy the inequality \eqref{inec-mu2}.
\end{proof}

\begin{rem} The function defined by the right hand side in \eqref{inec-mu2} is a strictly decreasing function of the variable $A_\infty$. Then, taking the limit $A_\infty \to +\infty$ to the inequality \eqref{inec-mu2}, we obtain the following inequality:
\begin{equation}\label{inec-mu2-1}
\erf\left(\mu_2\sqrt{\tfrac{\alpha_1}{\alpha_3}}\right)<\sqrt{\tfrac{k_3c_3}{k_2c_2}}\tfrac{A-B}{B-C}\erf\left( z_0\sqrt{\tfrac{\alpha_1}{\alpha_2}} \right).
\end{equation}
\end{rem}

\begin{rem} The inequality \eqref{inec-mu2} holds a physical significance for the solution \eqref{u3}-\eqref{QV} when the parameters of the problem \textbf{(P2)} satisfy the inequality:
\begin{equation}\label{inec-mu2-2}
\sqrt{\tfrac{k_3c_3}{k_2c_2}}\tfrac{A-B}{B-C} \tfrac{A_\infty-B}{A_\infty-A} \erf\left( z_0\sqrt{\tfrac{\alpha_1}{\alpha_2}} \right)<1, \qquad  A_\infty >A>B>C>D.
\end{equation}
\end{rem}

\begin{rem} The coefficient $h_0$ given by \eqref{h0equiv} can be thought as a function that depends on $A$. If we consider the following parameters: $A_\infty=334\;K$, $B=328\;K$, $C=324\;K$, $D=320\;K$, 
$k_1=0.2\; \tfrac{W}{mK}$,
$k_2=0.2\; \tfrac{W}{mK}$,
$k_3=0.2 \; \tfrac{W}{mK}$,
$c_1=2  \; \tfrac{J}{kg K}$,
$c_2=2 \; \tfrac{J}{kg K}$,
$c_3=2 \; \tfrac{J}{kg K}$,
$\rho=770\; \tfrac{kg}{m^3}$,
$\ell_1=160\; \tfrac{J}{kg}$,
$\ell_2=150\; \tfrac{J}{kg}$,
we can plot \( h_0 \) as a function of \( A \).  Figure \ref{Fig1:ho(A)} clearly shows that \( h_0 \) is a strictly increasing function, with a vertical asymptote at \(A= A_\infty \).

\begin{figure}[h!!!]
\begin{center}
\includegraphics[scale=0.5]{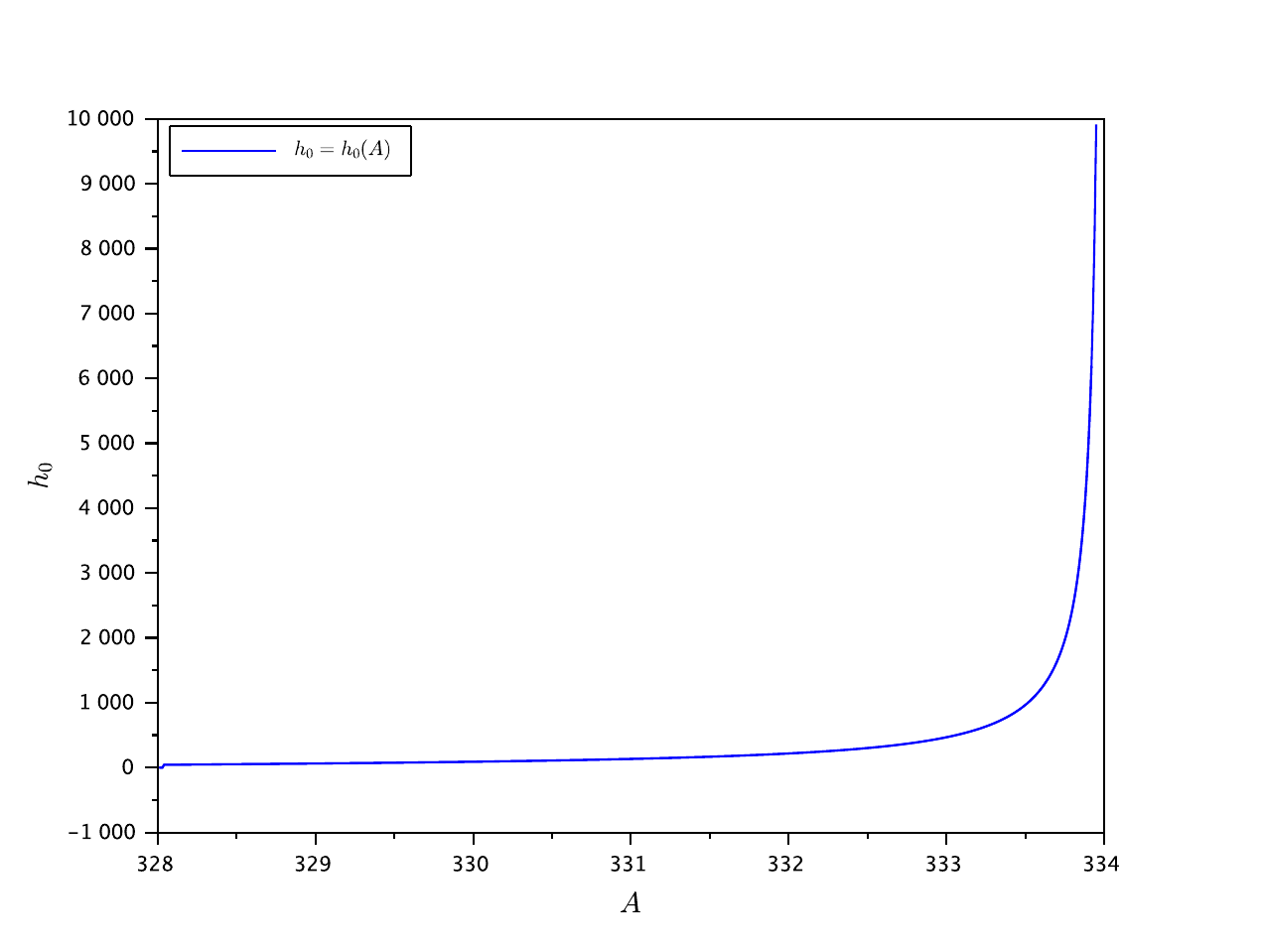}
\caption{Coefficient $h_0$ characterizing heat transfer under convective conditions as a function of 
$A$.}
\label{Fig1:ho(A)}
\end{center}
\end{figure}
\end{rem}

\begin{cor} Let $h_0$ and $A_\infty$ with $h_0>h_2$ be the data of the convective boundary condition of the problem \textbf{\emph{(P1)}}. The value $A$  that characterizes the temperature boundary condition at the fixed face $x=0$ to the problem  \textbf{\emph{(P2)}}, satisfies the following inequality:
\begin{equation}\label{inec-A}
B<A<A_\infty.
\end{equation}

\end{cor}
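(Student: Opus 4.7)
The plan is to recognize that $A$, as defined by \eqref{A} in Theorem \ref{Teor31}, can be rewritten as a strict convex combination of $B$ and $A_\infty$ with strictly positive weights. Once this is done, the desired chain $B<A<A_\infty$ follows immediately from the hypothesis $A_\infty>B$ together with the elementary property of weighted means.

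First I would introduce the shorthand $\omega_1:=\frac{k_3}{h_0\sqrt{\pi\alpha_3}}$ and $\omega_2:=\erf\left(\xi_2\sqrt{\alpha_1/\alpha_3}\right)$, so that \eqref{A} reads
$$A=\frac{\omega_1 B+\omega_2 A_\infty}{\omega_1+\omega_2}.$$
In this form, $A$ is visibly a weighted mean of $B$ and $A_\infty$, and the inequalities $B<A<A_\infty$ are equivalent to $\omega_1>0$ and $\omega_2>0$.

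The positivity of $\omega_1$ is trivial since $h_0,k_3,\alpha_3>0$. The only genuine step, and what I would regard as the main (though still mild) obstacle, is to show that $\omega_2>0$, i.e.\ that $\xi_2>0$. Here I would invoke the hypothesis $h_0>h_2$ and apply Theorem \ref{Teo:ExistenciaConvectivo} to conclude $\xi_1>z_0$. Then I would use the formula \eqref{xi2-convectivo} together with the fact, recalled right after the definition \eqref{H}, that the auxiliary function $H$ is strictly increasing with $H(z_0)=0$; this yields $H(\xi_1)>0$, hence $\erf^{-1}(H(\xi_1))>0$, and therefore $\xi_2>0$, as required.

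Combining the two positivity facts, $\omega_1,\omega_2>0$, with the standing assumption $A_\infty>B$, I would conclude $B<A<A_\infty$, completing the proof. No delicate estimate or nonlinear analysis is needed; the whole argument is essentially the observation that a strict convex combination of two distinct numbers lies strictly between them, once one unpacks the definitions provided by Theorems \ref{Teo:ExistenciaConvectivo} and \ref{Teor31}.
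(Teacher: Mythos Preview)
Your proposal is correct and is essentially the same argument as the paper's: both rest on the observation that the expression \eqref{A} for $A$ is a weighted average of $B$ and $A_\infty$ with strictly positive weights, so that $A-B>0$ and $A-A_\infty<0$. The paper's own proof is terser, simply citing part a) of Theorem \ref{Teor31} (where $v_3(0,t)-B>0$ is displayed), while you make the convex-combination structure explicit and spell out why $\xi_2>0$ via $\xi_1>z_0$ and the monotonicity of $H$; this extra detail is sound but not a different route.
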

\begin{proof}
From part a) of Theorem \ref{Teor31}, it follows that $A$, as given by \eqref{condA},  satisfy $A-A_\infty<0$ and $A-B>0$. Therefore, $A$ must also satisfy the inequality \eqref{inec-A}.
\end{proof}

\begin{cor} The value $A=A(h_0,A_\infty)$ given by \eqref{condA} is an increasing function of $h_0$. 
\end{cor}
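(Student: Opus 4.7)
The plan is to rewrite $A$ so that its $h_0$-dependence is transparent, and then reduce everything to a monotonicity claim about the root $\xi_1(h_0)$ of \eqref{ecxi1}. Setting $\beta := \tfrac{k_3}{h_0 \sqrt{\pi\alpha_3}}$ and $E := \erf\bigl(\xi_2\sqrt{\tfrac{\alpha_1}{\alpha_3}}\bigr)$, formula \eqref{A} can be rewritten as
\begin{equation*}
A - B \;=\; \frac{(A_\infty - B)\,E}{\beta + E} \;=\; \frac{A_\infty - B}{1 + \beta/E}.
\end{equation*}
Since $A_\infty > B$, proving that $A$ is strictly increasing in $h_0$ reduces to proving that $\beta/E$ is strictly decreasing in $h_0$. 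The factor $\beta$ is manifestly strictly decreasing in $h_0$, so the remaining task is to show that $E$ is strictly increasing in $h_0$, and since $\erf$ is increasing, this is equivalent to showing that $\xi_2(h_0)$ is strictly increasing.

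The key step is to analyse $\xi_1(h_0)$, the unique root of $Q(z) = U(z)$ in $(z_0, +\infty)$. In the definition \eqref{T} of $T$, the parameter $h_0$ enters only through the denominator $\tfrac{k_3}{h_0 \sqrt{\pi\alpha_3}} + \erf\bigl(z\sqrt{\tfrac{\alpha_1}{\alpha_3}}\bigr)$ of its first term; as $h_0$ grows, this denominator strictly decreases, so the first term (which is strictly positive) strictly increases, while the second term of $T$ and the composition $z \mapsto \sqrt{\alpha_2/\alpha_1}\,\erf^{-1}(H(z))$ are $h_0$-independent. Consequently $U(z) = T\bigl(\sqrt{\alpha_2/\alpha_1}\,\erf^{-1}(H(z))\bigr)$ is, for every fixed $z > z_0$, a strictly increasing function of $h_0$.

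Next I would invoke the monotonicity structure already used in the existence argument: $Q$ is strictly increasing in $z$ and independent of $h_0$, while $U(\cdot,h_0)$ is strictly decreasing in $z$. Hence $G(z,h_0) := Q(z) - U(z,h_0)$ is strictly increasing in $z$ for each $h_0$ and strictly decreasing in $h_0$ for each $z$. The implicit function reasoning is then immediate: if $h_0^{(1)} < h_0^{(2)}$ and $\xi_1^{(i)}$ denotes the corresponding root, then $G(\xi_1^{(1)}, h_0^{(2)}) < G(\xi_1^{(1)}, h_0^{(1)}) = 0$, so by the strict $z$-monotonicity of $G(\cdot, h_0^{(2)})$ we must have $\xi_1^{(2)} > \xi_1^{(1)}$. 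Finally, \eqref{xi2-convectivo} writes $\xi_2$ as an increasing composition of $\xi_1$ (since $H$ and $\erf^{-1}$ are both strictly increasing), hence $\xi_2(h_0)$ is strictly increasing, and so $E(h_0)$ is strictly increasing.

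Putting the pieces together, $\beta$ strictly decreases and $E$ strictly increases with $h_0$, so $\beta/E$ is strictly decreasing, and the rewriting above yields the strict increase of $A(h_0, A_\infty)$. The main obstacle I anticipate is the monotonicity analysis of $U$ in $h_0$; one should verify carefully that the sign of the first term of $T$ really is positive on the relevant range so that decreasing its denominator strictly increases $T$, and that the root of $Q = U$ remains in $(z_0, +\infty)$ as $h_0$ varies above $h_2$, both of which follow from the structural properties established in the proof of Theorem \ref{Teo:ExistenciaConvectivo}.
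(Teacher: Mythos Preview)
Your proof is correct and follows essentially the same approach as the paper: both rewrite $A$ algebraically to isolate the dependence on $h_0$ and $E=\erf\bigl(\xi_2\sqrt{\alpha_1/\alpha_3}\bigr)$, then show that $\xi_1$ (and hence $\xi_2$, hence $E$) is increasing in $h_0$ by noting that $T$ and therefore $U$ increase in $h_0$ while $Q$ is independent of $h_0$. The only cosmetic difference is the algebraic rewriting---the paper writes $A=B\,\Psi(h_0 E)$ with $\Psi(z)=\frac{1+\nu_1 z}{1+\nu_2 z}$ increasing, whereas you write $A-B=\frac{A_\infty-B}{1+\beta/E}$ with $\beta/E$ decreasing---and your implicit-function step for the monotonicity of $\xi_1(h_0)$ is spelled out a bit more explicitly than in the paper.
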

\begin{proof}

Notice that 
$$A(h_0, A_\infty) = B \frac{1 + A_\infty \frac{h_0 \sqrt{\pi \alpha_3}}{B k_3} \erf\left(\xi_2 \sqrt{\tfrac{\alpha_1}{\alpha_3}} \right)}{1 + \frac{h_0 \sqrt{\pi \alpha_3}}{k_3} \erf\left(\xi_2 \sqrt{\tfrac{\alpha_1}{\alpha_3}} \right)} = B \Psi\left(h_0 \erf\left(\xi_2 \sqrt{\tfrac{\alpha_1}{\alpha_3}} \right) \right)$$
where  $\Psi(z) = \frac{1 + \nu_1 z}{1 + \nu_2 z}$ with  $\nu_1 = \frac{\sqrt{\pi \alpha_3}}{k_3} \frac{A_\infty}{B}$ and $\nu_2 = \frac{\sqrt{\pi \alpha_3}}{k_3}.$
Taking into account that $A_\infty > B$, it follows that $\nu_1 - \nu_2 = \frac{\sqrt{\pi \alpha_3}}{k_3} \left(\frac{A_\infty}{B} - 1 \right) > 0$ and therefore  $\Psi'(z) = \frac{\nu_1 - \nu_2}{(1 + \nu_2 z)^2} > 0$ for all $z$. This means that $\Psi$ is an increasing function in $z$. 

In addition, notice that $\xi_2$ given by \eqref{xi2-convectivo} depends on $\xi_1$. In turn, $\xi_1$ is the unique solution to the equation \eqref{ecxi1}. On one hand, the function $T$ defined by \eqref{T} is an increasing function in $h_0$. As a consequence, by \eqref{U}, the function $U$ increases when $h_0$ becomes greater. On the other hand, the function $Q$ given by \eqref{Q} does not depend on $h_0$. Then it follows that the unique solution $\xi_1>z_0$ to the equation \eqref{ecxi1} increases in $h_0$. Consequently, $\xi_2$ also becomes an increasing function in $h_0$.

Putting all of the above together, it is easy to see that $A$ is an increasing function in $h_0$.

\end{proof}

\subsection{Equivalence between problems \emph{\textbf{(P2)}} and \emph{\textbf{(P3)}}}

Connecting a Stefan problem with a temperature boundary condition to the one with a flux boundary condition is crucial for understanding thermal phenomena. Analyzing their interaction provides a more comprehensive view of the thermal behavior of the system. Additionally, this relationship can reveal important mathematical properties of the underlying equations. 

In the following theorem, we establish the relationships to impose between the data of both problems in order to ensure their equivalence.

\begin{thm}\label{TeorP2P3} $ $
\begin{enumerate}
\item[\emph{a)}] Let $q_0>q_2$   be  the given constant of the flux condition of the problem \emph{\textbf{(P3)}}  where $q_2$ is defined by \eqref{q0-q2}. If we consider the problem \textbf{\emph{(P2)}} with a temperature boundary condition defined by
\begin{equation}\label{condA-relP3}
A=B+ \tfrac{q_0 \sqrt{\pi \alpha_3}}{k_3} \erf\left(\lambda_2 \sqrt{\tfrac{\alpha_1}{\alpha_3}} \right),
\end{equation}
where $\lambda_2$ is given by \eqref{lambda2}, then the solution to problem \textbf{\emph{(P2)}} coincides with the solution to problem \textbf{\emph{(P3)}}.
\item[\emph{b)}] Let  $A>B$  be the data of the temperature boundary condition of the problem \textbf{\emph{(P2)}}. If the following inequality holds:
\begin{equation}\label{condq-relP2}
\tfrac{k_3(A-B)}{\sqrt{\alpha_3}\erf\left( \mu_2 \sqrt{\tfrac{\alpha_1}{\alpha_3}} \right)}>\tfrac{k_2(B-C)}{\sqrt{\alpha_2}\erf\left(z_0\sqrt{\tfrac{\alpha_1}{\alpha_2}}\right)},
\end{equation}
where $\mu_2$ is given by \eqref{Fron-mu2},  then the solution to problem \textbf{\emph{(P3)}} with
\begin{equation}\label{q0(A)}
q_0=\tfrac{k_3}{\sqrt{\pi \alpha_3}} \tfrac{(A-B)}{\erf\left( \mu_2 \sqrt{\tfrac{\alpha_1}{\alpha_3}} \right)},
\end{equation}
coincides with the solution to problem \textbf{\emph{(P2)}}.
\end{enumerate}
\end{thm}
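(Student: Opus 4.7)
The plan is to mirror the proof of Theorem~\ref{Teor31}, working with the explicit similarity solutions and invoking the uniqueness statements of Theorems~\ref{CasoTemperatura} and~\ref{CasoFlujo}. In each direction I would take the known solution to one problem, compute the boundary value or flux it induces at $x=0$, identify this as the data of the other problem, and then verify that the characterising coefficients $(\lambda_1,\lambda_2)$ or $(\mu_1,\mu_2)$ of the first solution solve the system defining the coefficients of the second. Uniqueness then transfers equality of coefficients to equality of temperatures and free boundaries, which is exactly the claim of the theorem.

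For part~\emph{a)}, Theorem~\ref{CasoFlujo} supplies the unique solution $(\theta_3,\theta_2,\theta_1,s_2,s_1)$ of \textbf{(P3)} with parameters $(\lambda_1,\lambda_2)$. Evaluating \eqref{theta3} at $x=0$ yields
\begin{equation*}
\theta_3(0,t)=B+\tfrac{q_0\sqrt{\pi\alpha_3}}{k_3}\erf\left(\lambda_2\sqrt{\tfrac{\alpha_1}{\alpha_3}}\right),
\end{equation*}
which is exactly the $A$ prescribed by \eqref{condA-relP3}. Since $q_0>0$ and the error function is positive, $A>B$, so Theorem~\ref{CasoTemperatura} applies and produces $(\mu_1,\mu_2)$ satisfying \eqref{mu2}--\eqref{QV}. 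To conclude I would show that $(\lambda_1,\lambda_2)$ also satisfies this system: \eqref{mu2} coincides with \eqref{lambda2}, so only \eqref{QV} remains. Substituting $A-B$ from \eqref{condA-relP3} into \eqref{V} and evaluating at $z=\lambda_2$, the $\erf$ factor cancels and the constants collapse using $c_3\alpha_3=k_3/\rho$ into exactly $P(\lambda_2)$ as defined in \eqref{P}. Since $(\lambda_1,\lambda_2)$ solves \eqref{lambda1}, $Q(\lambda_1)=P(\lambda_2)=V(\lambda_2)$, which is \eqref{QV} at $z=\lambda_1$. Uniqueness then forces $\mu_i=\lambda_i$, and hence $u_i=\theta_i$ and $r_i=s_i$.

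For part~\emph{b)}, take the unique solution $(u_3,u_2,u_1,r_2,r_1)$ of \textbf{(P2)} from Theorem~\ref{CasoTemperatura}. Differentiating \eqref{u3} at $x=0$ produces
\begin{equation*}
k_3\tfrac{\partial u_3}{\partial x}(0,t)=-\tfrac{k_3(A-B)}{\sqrt{\pi\alpha_3}\,\erf\left(\mu_2\sqrt{\alpha_1/\alpha_3}\right)}\tfrac{1}{\sqrt{t}}=-\tfrac{q_0}{\sqrt{t}}
\end{equation*}
with $q_0$ as in \eqref{q0(A)}. A direct rearrangement shows that \eqref{condq-relP2} is precisely $q_0>q_2$, so Theorem~\ref{CasoFlujo} applies and gives $(\lambda_1,\lambda_2)$. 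The same identity $V(\mu_2)=P(\mu_2)$, read in reverse, shows that $(\mu_1,\mu_2)$ solves \eqref{lambda2}--\eqref{lambda1}; uniqueness yields $\lambda_i=\mu_i$ and thus equality of temperatures and free boundaries.

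The main obstacle is the single algebraic identity $V(\lambda_2)=P(\lambda_2)$ under the substitution $A-B=\tfrac{q_0\sqrt{\pi\alpha_3}}{k_3}\erf\left(\lambda_2\sqrt{\alpha_1/\alpha_3}\right)$. The only non-trivial simplification is that the prefactor $\sqrt{c_1c_3k_3/(\pi k_1)}$ from \eqref{V} combines with $\sqrt{\pi\alpha_3}/k_3$ from the substitution to give $\sqrt{c_1/(\rho k_1)}$, matching the prefactor in \eqref{P}. Once this identification is in hand, the equivalence of the two characterising systems is immediate, and the remaining steps are direct manipulations of the explicit solutions already produced by Theorems~\ref{CasoTemperatura} and~\ref{CasoFlujo}; no new analytical difficulty beyond that already present in the proof of Theorem~\ref{Teor31} appears.
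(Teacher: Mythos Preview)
Your proposal is correct and follows essentially the same route as the paper: in each direction you evaluate the boundary data induced at $x=0$ by the known solution, feed it into the other problem, and then use the uniqueness of the characterising pair $(\mu_1,\mu_2)$ or $(\lambda_1,\lambda_2)$ to identify the two solutions. Your explicit verification of the identity $V(\lambda_2)=P(\lambda_2)$ via the simplification $\tfrac{\sqrt{\pi\alpha_3}}{k_3}\sqrt{\tfrac{c_1c_3k_3}{\pi k_1}}=\sqrt{\tfrac{c_1}{\rho k_1}}$ is exactly the algebraic step the paper's proof hides inside its rewritten system, so nothing is missing.
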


\begin{proof} $ $
\begin{enumerate}

\item[a)] Using the solution to the problem \textbf{(P3)}, which involves the temperatures $\theta_3$, $\theta_2$ and $\theta_1$, along with the free boundaries $s_2$ and $s_1$ established in Section 2.2, it is possible to determine the temperature at $x = 0$:
\begin{equation}
\theta_3(0,t)=B+ \tfrac{q_0 \sqrt{\pi \alpha_3}}{k_3} \erf\left(\lambda_2 \sqrt{\tfrac{\alpha_1}{\alpha_3}} \right).
\end{equation}
Taking into account that $\theta_3(0,t) > B$, we can define the three-phase problem \textbf{(P2)} with a temperature condition at $x=0$ specified as $A = \theta_3(0,t)$, i.e. $A$ is given by \eqref{condA-relP3}.

Then, for this data, we can rewrite the solution to problem \textbf{(P2)} as
$$ u_3(x,t)= \left(B+ \tfrac{q_0 \sqrt{\pi \alpha_3}\erf\left(\lambda_2 \sqrt{\tfrac{\alpha_1}{\alpha_3}} \right)}{k_3}  \right)\tfrac{\erf\left(\mu_2  \sqrt{\tfrac{\alpha_1}{\alpha_3}}\right)-\erf\left(\tfrac{x}{2\sqrt{\alpha_3 t}}\right)}{\erf\left(\mu_2  \sqrt{\tfrac{\alpha_1}{\alpha_3}}\right)}+B\tfrac{\erf\left(\tfrac{x}{2\sqrt{\alpha_3 t}}\right)}{\erf\left(\mu_2  \sqrt{\tfrac{\alpha_1}{\alpha_3}}\right)},\; 0<x<r_2(t), \; t>0,$$
and  (\ref{u2})-(\ref{QV}).

From the values given by (\ref{Ste}) and \eqref{condA-relP3}, and the equations  \eqref{mu2} and \eqref{QV}, it follows that the coefficients $\mu_1$ and $\mu_2$ constitute the unique solution to the following system of equations
\begin{equation}\label{sistemaP2P3}
\left\lbrace
\begin{array}{llll}
& \tfrac{\ell_1}{\ell_2} \varphi(z_1) \exp\left(z_1^2 \tfrac{\alpha_1}{\alpha_2}\right) =\frac{q_0}{\ell_2}\sqrt{\tfrac{c_1 }{\rho k_1}} \tfrac{\erf\left(\lambda_2 \sqrt{\tfrac{\alpha_1}{\alpha_3}}\right) \exp\left(-z_2^2\alpha_1 \left(\tfrac{1}{\alpha_3}-\tfrac{1}{\alpha_2}\right)\right)} {\erf\left(z_2 \sqrt{\tfrac{\alpha_1}{\alpha_3}}\right)}  - z_2 \exp\left(z_2^2 \tfrac{\alpha_1}{\alpha_2}\right), \\
& z_2  =\sqrt{\tfrac{\alpha_2}{\alpha_1}} \erf^{-1}(H(z_1)).
\end{array}
\right.
\end{equation}
Given that \( \lambda_2 \) is defined by \eqref{lambda2} and \( \lambda_1 \) is the unique solution to \eqref{lambda1}, we can conclude that \( \lambda_1 \) and \( \lambda_2 \) form a solution to the system \eqref{sistemaP2P3}. Due to uniqueness, it follows that \( \lambda_1 = \mu_1 \) and \( \lambda_2 = \mu_2 \). Consequently, we can immediately deduce that \( \theta_i(x,t) = u_i(x,t) \) for \( i = 1, 2, 3 \).

\item[b)] Using the temperatures \( u_3 \), \( u_2 \), and \( u_1 \), along with the free boundaries \( r_2 \) and \( r_1 \), which represent the unique solution to problem \textbf{(P2)} as defined by \eqref{u3}-\eqref{V}, we derive
$$
k_3 \frac{\partial u_3}{\partial x}(0,t) = \tfrac{-(A-B)k_3}{\sqrt{\pi \alpha_3} \, \erf\left( \mu_2 \sqrt{\tfrac{\alpha_1}{\alpha_3}} \right)}  \tfrac{1}{\sqrt{t}},
$$
and therefore we can compute the flux condition at the fixed face $x=0$ given by \eqref{neumann}. Since \eqref{condq-relP2} holds, the unique solution to the  problem \textbf{(P3)} with $q_0$ defined by \eqref{q0(A)} is given by 
\begin{equation}\label{theta3-equiv}
\theta_3(x,t)=B+\tfrac{A-B}{\erf\left(\mu_2 \sqrt{\tfrac{\alpha_1}{\alpha_3}} \right)} \left( \erf\left( \lambda_2 \sqrt{\tfrac{\alpha_1}{\alpha_3}}\right) -\erf\left( \tfrac{x}{2\sqrt{\alpha_3 t}}\right) \right), \quad 0<x<s_2(t),\; t>0,
\end{equation}
and \eqref{theta2}-\eqref{lambda1}.
In addition, taking into account (\ref{Ste}),  \eqref{lambda2} and \eqref{lambda1},  the coefficients  $\lambda_1$ and $\lambda_2$ constitute the unique solution to the following system of equations
\begin{equation}\label{sistemaP1P2}
\left\lbrace
\begin{array}{llll}
& \tfrac{\ell_1}{\ell_2} \varphi(z_1) \exp\left(z_1^2 \tfrac{\alpha_1}{\alpha_2}\right) =\tfrac{A-B}{\ell_2} \sqrt{\tfrac{c_1 c_3 k_3}{k_1 \pi}} \tfrac{ \exp\left( -z_2^2 \alpha_1 \left(\tfrac{1}{\alpha_3}-\tfrac{1}{\alpha_2} \right) \right)}{\erf\left(\mu_2  \sqrt{\tfrac{\alpha_1}{\alpha_3}}\right)} - z_2 \exp\left(z_2^2 \tfrac{\alpha_1}{\alpha_2}\right), \\
& z_2  =\sqrt{\tfrac{\alpha_2}{\alpha_1}} \erf^{-1}(H(z_1)).
\end{array}
\right.
\end{equation}
Given that \( \mu_2 \) is defined by \eqref{mu2} and that \( \mu_1 \) is the unique solution to \eqref{QV}, we can conclude that \( \mu_1 \) and \( \mu_2 \) form a solution to the system \eqref{sistemaP1P2}. Due to the uniqueness, it follows that \( \lambda_1 = \mu_1 \) and \( \lambda_2 = \mu_2 \). Consequently, we can immediately deduce that \( \theta_i(x,t) = u_i(x,t) \) for \( i = 1, 2, 3 \).

\end{enumerate}
\end{proof}
\begin{rem} A sufficient condition for \eqref{condq-relP2} to be satisfied is
$A>\tfrac{q_2 \sqrt{\pi \alpha_3}}{k_3}+B$.
\end{rem}

\medskip

The earlier theorem reveals that problems \textbf{(P2)} and \textbf{(P3)} are equivalent under certain data-related conditions. As a result, the following relationships can be established, leading to a generalization of the findings presented in \cite{Ta1981-1982}.

\begin{cor} Let  $A>B$ be the data of the temperature boundary condition of the problem \textbf{\emph{(P2)}}. The coefficient $\mu_2$   given by \eqref{Fron-mu2} satisfies the following inequality:
\begin{equation}\label{inec-mu2-flujo}
\erf\left(\mu_2\sqrt{\tfrac{\alpha_1}{\alpha_3}}\right)< \tfrac{k_3}{k_2}\sqrt{\tfrac{\alpha_2}{\alpha_3}}\tfrac{A-B}{B-C}\erf\left( z_0\sqrt{\tfrac{\alpha_1}{\alpha_2}} \right),
\end{equation}
where $z_0$ is given by \eqref{z0}.
\end{cor}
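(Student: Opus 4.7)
The plan is to replicate, mutatis mutandis, the argument used earlier to derive \eqref{inec-mu2} from part b) of Theorem \ref{Teor31}, now invoking part b) of Theorem \ref{TeorP2P3} instead.

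First I would observe that, for any $A > B$, Theorem \ref{CasoTemperatura} guarantees the existence and uniqueness of a three-phase solution to \textbf{(P2)}, producing in particular the dimensionless parameter $\mu_2 > 0$ characterizing the free boundary $r_2$. Setting $q_0$ as in \eqref{q0(A)}, a direct differentiation of $u_3$ from \eqref{u3} yields $k_3 \partial_x u_3(0,t) = -q_0/\sqrt{t}$, so the (P2)-solution satisfies the Neumann flux condition \eqref{neumann} with precisely this $q_0$. Hence the (P2)-solution is itself a three-phase similarity solution of problem \textbf{(P3)} with this value of $q_0$.

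Next I would appeal to Theorem \ref{CasoFlujo} together with the classification in the remark following it: a genuine three-phase similarity solution of (P3) can arise only when $q_0 > q_2$, where $q_2$ is given by \eqref{q0-q2}; otherwise the problem degenerates to a two-phase Stefan problem (for $q_1 < q_0 \le q_2$) or to a pure heat conduction problem in the initial solid phase (for $0 < q_0 \le q_1$). Since we have just exhibited a genuine three-phase solution of (P3) with the $q_0$ defined above, we must have $q_0 > q_2$.

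Finally, substituting the explicit expressions \eqref{q0(A)} and \eqref{q0-q2} into the inequality $q_0 > q_2$ and clearing denominators is routine algebra, producing exactly $\erf\left(\mu_2\sqrt{\alpha_1/\alpha_3}\right) < \tfrac{k_3}{k_2}\sqrt{\alpha_2/\alpha_3}\tfrac{A-B}{B-C}\erf\left(z_0\sqrt{\alpha_1/\alpha_2}\right)$, which is \eqref{inec-mu2-flujo}. The only conceptual (and mild) obstacle is the middle implication: justifying that the existence of a genuine three-phase structure forces $q_0 > q_2$. This is not a new computation but rather a careful reading of the regime analysis in the remark after Theorem \ref{CasoFlujo}; everything else in the argument reduces to evaluating a derivative and rearranging a fraction.
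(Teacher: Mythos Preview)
Your proposal is correct and follows essentially the same approach as the paper. The paper's own proof is a terse two-sentence invocation of part b) of Theorem \ref{TeorP2P3} to conclude that the $q_0$ in \eqref{q0(A)} must exceed $q_2$, after which \eqref{inec-mu2-flujo} drops out by rearrangement; you spell out the underlying logic (that the (P2) solution is itself a three-phase (P3) solution, so the regime classification forces $q_0>q_2$) more explicitly than the paper does, but the strategy is identical.
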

\begin{proof}
Part b) of the previous theorem indicates that \( q_0 \), as defined by \eqref{q0(A)}, must satisfy \( q_0 > q_2 \), where \( q_2 \) is given by \eqref{q0-q2}. Consequently, the coefficient \( \mu_2 \) that characterizes the interface \( x = r_2(t) \) of the solution to problem \textbf{(P2)} must also comply with the inequality \eqref{inec-mu2-flujo}.
\end{proof}

\begin{cor} The value $A=A(q_0)$ given by \eqref{condA-relP3} is an increasing function of $q_0$. 
\end{cor}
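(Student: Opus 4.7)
The plan is to trace the dependence of $A$ on $q_0$ through the chain $q_0 \mapsto \lambda_1 \mapsto \lambda_2 \mapsto A$. From \eqref{condA-relP3}, $A - B$ is the product of the linear factor $\frac{\sqrt{\pi\alpha_3}}{k_3}q_0$ (trivially strictly increasing in $q_0$) and $\erf(\lambda_2 \sqrt{\alpha_1/\alpha_3})$, so it suffices to prove that $\lambda_2$ is also a nondecreasing function of $q_0$.

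The key step is to show that $\lambda_1$ is strictly increasing in $q_0$, using the implicit definition \eqref{lambda1}. I would first recall that $Q$ is strictly increasing in $z$ (established in Section 2.1) and that under the standing hypothesis $\alpha_2 > \alpha_3$, the right-hand side $z \mapsto P\bigl(\sqrt{\alpha_2/\alpha_1}\,\erf^{-1}(H(z))\bigr)$ is strictly decreasing in $z$; this monotonicity is precisely the one used to obtain uniqueness in Theorem \ref{CasoFlujo}, mirroring the treatment of $U$ in Theorem \ref{Teo:ExistenciaConvectivo}. Next, inspection of \eqref{P} shows that for any fixed argument, $P$ depends linearly on $q_0$ with strictly positive slope $\frac{1}{\ell_2}\sqrt{c_1/(\rho k_1)}\exp(-z^2 \alpha_1/\alpha_3) > 0$.

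With these three monotonicities in hand, a comparison argument completes the step. Fix $q_0^{(1)} < q_0^{(2)}$ with respective roots $\lambda_1^{(1)}$ and $\lambda_1^{(2)}$ of \eqref{lambda1}. Evaluating at $z = \lambda_1^{(1)}$, the left-hand side is unchanged while the right-hand side strictly increases when $q_0^{(1)}$ is replaced by $q_0^{(2)}$. Since $Q$ is strictly increasing in $z$ and the right-hand side strictly decreasing in $z$, the unique intersection must shift to the right, yielding $\lambda_1^{(2)} > \lambda_1^{(1)}$.

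Finally, because $H$ is strictly increasing (as noted after \eqref{H}) and $\erf^{-1}$ is strictly increasing, \eqref{lambda2} transfers the monotonicity to $\lambda_2$, and hence to $\erf(\lambda_2\sqrt{\alpha_1/\alpha_3})$. The expression \eqref{condA-relP3} then exhibits $A$ as $B$ plus a product of two positive strictly increasing functions of $q_0$, which is itself strictly increasing. The main obstacle is the implicit-function step for $\lambda_1$: since \eqref{lambda1} admits no closed-form solution, one must combine the monotonicities of both sides in $z$ with the monotone dependence of $P$ on the parameter $q_0$, rather than attempt a direct differentiation.
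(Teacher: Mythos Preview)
Your proof is correct and follows essentially the same route as the paper: trace $q_0\mapsto\lambda_1\mapsto\lambda_2\mapsto A$, using that $P$ is increasing in $q_0$ while $Q$ is independent of $q_0$, so the unique root $\lambda_1$ of \eqref{lambda1} increases with $q_0$, and then \eqref{lambda2} and \eqref{condA-relP3} carry the monotonicity through to $A$. Your explicit comparison argument for $\lambda_1$ and the decomposition of $A-B$ as a product of two increasing factors simply spell out steps that the paper's (very brief) proof leaves implicit.
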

\begin{proof}

It is important to note that \( \lambda_2 \), as defined by \eqref{lambda2}, is dependent on \( \lambda_1 \). Furthermore, \( \lambda_1 \) represents the unique solution to the equation \eqref{lambda1}, which itself is influenced by \( q_0 \). On one hand, the function \( Q \) is independent of \( q_0 \). Conversely, \( P \) is an increasing function of \( q_0 \). Consequently, \( \lambda_1 \) is also an increasing function of \( q_0 \), and this behaviour extends to \( \lambda_2 \) as well.
According to the definition of $A$, it follows directly that the thesis is validated.

\end{proof}

\begin{rem} The coefficient $q_0$ given by \eqref{q0(A)} can be thought as  a function that depends  on $A$.
If we consider  $B=328\;K$, $C=324\;K$, $D=320\;K$, 
$k_1=0.2\; \tfrac{W}{mK}$,
$k_2=0.2\; \tfrac{W}{mK}$,
$k_3=0.2 \; \tfrac{W}{mK}$,
$c_1=2  \; \tfrac{J}{kg K}$,
$c_2=2 \; \tfrac{J}{kg K}$,
$c_3=2 \; \tfrac{J}{kg K}$,
$\rho=770\; \tfrac{kg}{m^3}$,
$\ell_1=160\; \tfrac{J}{kg}$,
$\ell_2=150\; \tfrac{J}{kg}$,
we can plot \( q_0 \) as a function of \( A \).  Figure \ref{Fig2:qo(A)} clearly shows that \( q_0 \) is a strictly increasing function.

\begin{figure}[h!!!]
\begin{center}
\includegraphics[scale=0.5]{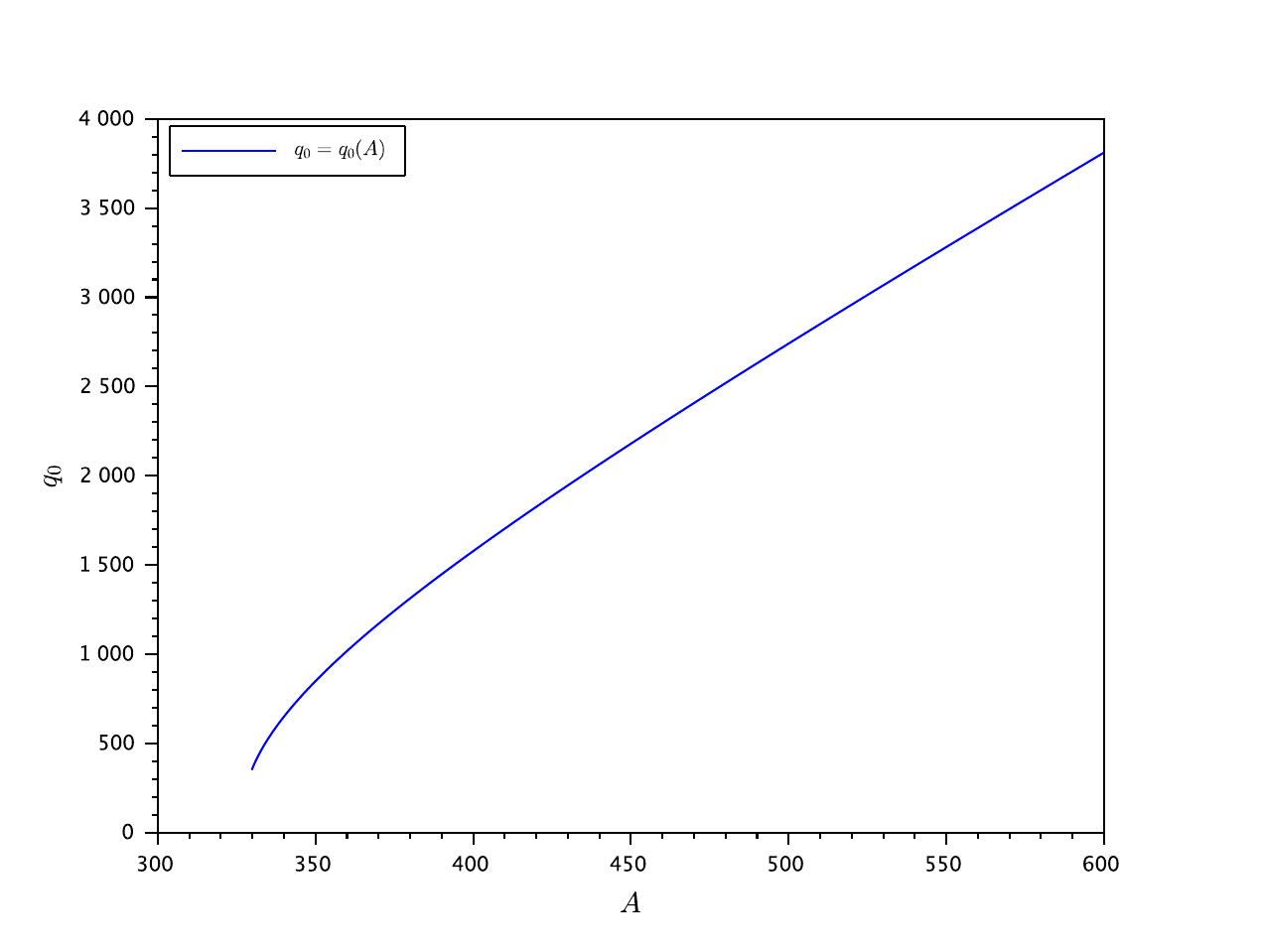}
\caption{Coefficient $q_0$ characterizing  the flux condition as a function of 
$A$.}
\label{Fig2:qo(A)}

\end{center}
\end{figure}
\end{rem}

\newpage
\subsection{Equivalence between problems \emph{\textbf{(P1)}} and \emph{\textbf{(P3)}}}

As illustrated in the previous subsections, the following theorem establishes the relationship between the problems involving convective and flux boundary conditions at the fixed face.

\begin{thm}\label{TeorP1P3} $ $
\begin{enumerate}
\item[\emph{a)}] Let $h_0$ and $A_\infty$  be  the given constants of the convective condition of the problem \emph{\textbf{(P1)}} with $A_\infty>B$ and $h_0>h_2$ where $h_2$ is given by \eqref{h2}. If the following inequality holds:
\begin{equation}\label{condq-P1P3}
\tfrac{(A_\infty-B) h_0}{1+\tfrac{h_0 \sqrt{\pi \alpha_3}}{k_3} \erf\left( \xi_2 \sqrt{\tfrac{\alpha_1}{\alpha_3}}\right)}>\tfrac{k_2(B-C)}{\sqrt{\alpha_2\pi}\erf\left(z_0\sqrt{\tfrac{\alpha_1}{\alpha_2}}\right)},
\end{equation}
where $\xi_2$ is given by \eqref{xi2}, then the solution to problem \textbf{\emph{(P3)}} with
\begin{equation}\label{q0(h)}
q_0=\tfrac{(A_\infty-B) h_0}{1+\tfrac{h_0 \sqrt{\pi \alpha_3}}{k_3} \erf\left( \xi_2 \sqrt{\tfrac{\alpha_1}{\alpha_3}}\right)},
\end{equation}
 coincides with the solution to problem \textbf{\emph{(P1)}}.

\item[\emph{b)}] Let $q_0>q_2$   be  the given constant of the flux condition of the problem \emph{\textbf{(P3)}}  where $q_2$ is defined by \eqref{q0-q2}. If the following inequality holds: 
\begin{equation}\label{condA-relP1P3}
\tfrac{q_0}{(A_\infty-B)-q_0 \tfrac{\sqrt{\pi\alpha_3}}{k_3} \erf\left(\lambda_2 \sqrt{\tfrac{\alpha_1}{\alpha_3}} \right)}>\tfrac{B-C}{A_{\infty}-B} \sqrt{\tfrac{k_2k_3c_2}{\pi \alpha_3c_3}}\tfrac{1}{\erf\left(z_0 \sqrt{\tfrac{\alpha_1}{\alpha_2}}\right)},
\end{equation}
where $\lambda_2$ is given by \eqref{lambda2} and $A_\infty>B$, 
then the solution to problem \textbf{\emph{(P1)}} with
\begin{equation}
h_0=\tfrac{q_0}{(A_\infty-B)-q_0 \tfrac{\sqrt{\pi\alpha_3}}{k_3} \erf\left(\lambda_2 \sqrt{\tfrac{\alpha_1}{\alpha_3}} \right)},
\end{equation}
coincides with the solution to problem \textbf{\emph{(P3)}}.

\end{enumerate}
\end{thm}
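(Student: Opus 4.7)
The plan is to mimic the structure of Theorems \ref{Teor31} and \ref{TeorP2P3}: in each direction, use the explicit solution of one problem to compute, at $x=0$, the boundary datum required by the other, and then invoke uniqueness of the algebraic system characterising $(\xi_1,\xi_2)$ or $(\lambda_1,\lambda_2)$ to conclude that the two solutions agree term by term.

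For part (a), I would start from $(v_1,v_2,v_3,w_1,w_2)$ furnished by Theorem \ref{Teo:ExistenciaConvectivo}. Differentiating \eqref{v3bis} at $x=0$ and substituting into the Robin condition \eqref{convectiva}, the resulting flux at the fixed face reads
$$k_3\tfrac{\partial v_3}{\partial x}(0,t)=\tfrac{h_0}{\sqrt{t}}\bigl(v_3(0,t)-A_\infty\bigr)=-\tfrac{1}{\sqrt{t}}\cdot\tfrac{(A_\infty-B)h_0}{1+\tfrac{h_0\sqrt{\pi\alpha_3}}{k_3}\erf\left(\xi_2\sqrt{\tfrac{\alpha_1}{\alpha_3}}\right)},$$
which matches \eqref{neumann} with $q_0$ precisely given by \eqref{q0(h)}. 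The hypothesis \eqref{condq-P1P3} is then exactly the translation of $q_0>q_2$, so Theorem \ref{CasoFlujo} produces a unique solution to \textbf{(P3)}. Substituting \eqref{q0(h)} into the defining system \eqref{lambda2}--\eqref{lambda1} for $(\lambda_1,\lambda_2)$ and comparing with \eqref{xi2-convectivo}--\eqref{ecxi1}, I would verify that $(\xi_1,\xi_2)$ satisfies the \textbf{(P3)}-system; by uniqueness $\xi_i=\lambda_i$, and consequently $v_i\equiv\theta_i$ and $w_i\equiv s_i$.

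For part (b), I would start from the \textbf{(P3)} solution and compute both $\theta_3(0,t)=B+\tfrac{q_0\sqrt{\pi\alpha_3}}{k_3}\erf\left(\lambda_2\sqrt{\tfrac{\alpha_1}{\alpha_3}}\right)$ and $k_3\partial\theta_3/\partial x(0,t)=-q_0/\sqrt{t}$. Imposing the Robin relation $k_3\partial\theta_3/\partial x(0,t)=(h_0/\sqrt{t})\bigl(\theta_3(0,t)-A_\infty\bigr)$ and solving for $h_0$ yields exactly the stated formula. The inequality \eqref{condA-relP1P3} is the translation of $h_0>h_2$, so Theorem \ref{Teo:ExistenciaConvectivo} applies, and a parallel system-matching argument gives $\lambda_i=\xi_i$ and the identity of the two solutions.

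The main obstacle is purely algebraic: one must verify that \eqref{condq-P1P3} is equivalent to $q_0>q_2$ and that \eqref{condA-relP1P3} is equivalent to $h_0>h_2$ once the defining formulas are substituted into the thresholds \eqref{q0-q2} and \eqref{h2}. As a sanity check (and a possible shortcut) one may also derive the theorem by composing the already proved Theorems \ref{Teor31} and \ref{TeorP2P3} through \textbf{(P2)}: the intermediate temperature $A$ given by \eqref{A} together with the flux \eqref{q0(A)} reproduces \eqref{q0(h)}, confirming transitivity of the equivalence.
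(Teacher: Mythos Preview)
Your proposal is correct and is exactly the argument the paper has in mind: the paper's own proof consists solely of the sentence ``The proof is straightforward,'' relying on the reader to reproduce the pattern of Theorems \ref{Teor31} and \ref{TeorP2P3}, which is precisely what you have done. Your computations of the induced flux in part (a) and of the induced convective coefficient in part (b) are right, and your identification of \eqref{condq-P1P3} with $q_0>q_2$ and of \eqref{condA-relP1P3} with $h_0>h_2$ is the intended reading.
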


\begin{proof}
The proof is straightforward.
\end{proof}

\begin{rem} 
 If we assume that 
\[
A_\infty > B + \sqrt{\tfrac{\alpha_3}{\alpha_2}} \tfrac{k_2}{k_3} \tfrac{B - C}{\operatorname{erf} \left( z_0 \sqrt{\frac{\alpha_1}{\alpha_2}} \right)},
\]
and 
\[
h_0 > \max \left\{ h_2, h_2^* \right\},
\]
where \( h_2 \) is given by \eqref{h2} and \( h_2^* > 0 \) is such that \( F(h_2^*) = 0 \), with
\[
F(z) = \tfrac{k_3 (A_\infty - B) \sqrt{\pi \alpha_2} \operatorname{erf} \left( z_0 \sqrt{\frac{\alpha_1}{\alpha_2}} \right) z}{k_2 (B - C) \left( k_3 + z \sqrt{\pi \alpha_3} \right)}, \qquad z \geq 0,
\]
and \( z_0 \) is given by \eqref{z0}, then condition \eqref{condq-P1P3} is automatically satisfied.

In addition, if $$q_0>\tfrac{B-C}{A_{\infty}-B} \sqrt{\tfrac{k_2k_3c_2}{\pi \alpha_3c_3}}\tfrac{1}{\erf\left(z_0 \sqrt{\tfrac{\alpha_1}{\alpha_2}}\right)},$$ then the inequality given by \eqref{condA-relP1P3} holds, for all $A_\infty>B$.
\end{rem}

\section*{Conclusions}
 This study provided a unique explicit solution for the three-phase Stefan problem in a semi-infinite material with a convective boundary condition at the fixed face. The equivalence among the solutions of three Stefan problems with different boundary conditions (Robin, Dirichlet, and Neumann) was demonstrated, provided that a specific relationship between the problem data was satisfied. Additionally, numerical examples were performed to illustrate the validity of the obtained results and to explore the system's behavior under various boundary condition configurations. These findings offer a deeper understanding of heat transfer processes in phase-change systems, with significant implications for material science and engineering applications.

\section*{Acknowledgement}
\noindent The present work has been partially sponsored by the  project PIP-CONICET 11220220100532CO and the projects  80020210100002 and 80020210200003 from Austral University, Rosario, Argentina.

\end{document}